\documentclass[a4paper,10pt,reqno]{amsart}

\usepackage{amsmath,amssymb,amscd,amsxtra,amsthm,verbatim,textcomp}
\usepackage{tikz-cd}

\usepackage[scr=dutchcal]{mathalfa}

\newtheorem{Thm}{Theorem}[section]
\newtheorem{Prop}[Thm]{Proposition}
\newtheorem{Lem}[Thm]{Lemma}
\newtheorem{Cor}[Thm]{Corollary}

\DeclareMathOperator{\Hom}{Hom}

\DeclareMathOperator{\Ext}{Ext}

\DeclareMathOperator{\Ker}{Ker}
\DeclareMathOperator{\Coker}{Coker}

\DeclareMathOperator{\add}{add}

\newcommand{\cH}{\mathbscr H}
\newcommand{\cT}{\mathbscr T}
\newcommand{\cD}{\mathbscr D}
\newcommand{\cC}{\mathbscr C}
\newcommand{\cE}{\mathbscr E}
\newcommand{\thick}{\mathrm{thick}(\cH)}

\newcommand{\SHC}{\mathbf{SH}}
\newcommand{\Ab}{\mathbf{Ab}}

\newcommand{\AxiomA}{TR4'}
\newcommand{\AxiomB}{TR4''}

\tikzset{commutative diagrams/column sep/normal=30pt}

\author{Andrew Hubery}
\address{Andrew Hubery\\Bielefeld University}
\email{hubery@math.uni-bielefeld.de}
\subjclass[2010]{18E30,18E10}
\title[Characterising $\cD^b(\cH)$]{Characterising the bounded derived category of an hereditary abelian category}

\usepackage{hyperref}


\begin{document}

\maketitle

\section{Introduction}

The bounded derived category $\cD$ of an hereditary abelian category $\cH$ has a fairly lucid structure: as an additive category, $\cD$ is the additive closure of the union of shifts $\bigcup_n\cH[n]$, and there are no morphisms from $\cH[r]$ to $\cH[s]$ for $s\neq r,r+1$. An old preprint of Ringel \cite{Ringel} aimed to characterise such categories amongst all triangulated categories, but it was not shown the functor constructed in that paper was exact. In \cite{BvR} it is shown that Ringel's result holds true provided the triangulated category is known to be the bounded derived category of an abelian category, and in a recent preprint \cite{Chen-Ringel} this is extended to algebraic triangulated categories, so the stable category of a Frobenius category. We remark that both of these results rely on the notion of a filtered triangulated category, as introduced in \cite{BBD}. In fact, as pointed out by van Roosmalen, there is a much easier proof in the case of an algebraic triangulated category, using a theorem of Keller and Vossieck \cite{KV}. We give this argument in \hyperref[Sec:alg]{Section \ref*{Sec:alg}}.

The main result of this paper is to show that we can drop this assumption on the triangulated category being algebraic.

\begin{Thm}
Let $\cT$ be a triangulated category and $\cH\subset\cT$ an admissible hereditary abelian subcategory. Then we can lift the embedding $\cH\to\cT$ to a triangle equivalence $F\colon\cD\xrightarrow\sim\thick$ from the bounded derived category of $\cH$ to the thick subcategory of $\cT$ generated by $\cH$.
\end{Thm}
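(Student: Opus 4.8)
The plan is to build the equivalence completely explicitly and then prove exactness by hand; the whole point is that heredity forces every potential obstruction to land in an $\Ext^2$-group and hence to vanish, which is exactly what lets the argument run over an arbitrary triangulated $\cT$, with no enhancement or algebraicity available.

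\emph{Construction of $F$ and the elementary part.} For hereditary $\cH$ one has $X\cong\bigoplus_nH^n(X)[-n]$ in $\cD:=\cD^b(\cH)$, and
\[
\Hom_\cD(X,Y)\;\cong\;\bigoplus_n\Hom_\cH\!\bigl(H^nX,H^nY\bigr)\ \oplus\ \bigoplus_n\Ext^1_\cH\!\bigl(H^nX,H^{n-1}Y\bigr).
\]
The defining properties of an admissible abelian subcategory (together with heredity) provide, for $H,H'\in\cH$, an identification $\Hom_\cT(H,H')=\Hom_\cH(H,H')$, a natural isomorphism $\Hom_\cT(H,H'[1])\cong\Ext^1_\cH(H,H')$ compatible with the $\Hom_\cH$-bimodule structures, and the vanishing $\Hom_\cT(H,H'[n])=0$ for $n\notin\{0,1\}$; moreover each short exact sequence $0\to A\to E\to C\to0$ in $\cH$ gives a distinguished triangle $A\to E\to C\to A[1]$ in $\cT$ with connecting morphism the extension class. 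Define $F$ on objects by $FX=\bigoplus_nH^n(X)[-n]$ (shifts and sum now formed in $\cT$), and on morphisms by transporting the $\Hom_\cH$-components identically and the $\Ext^1_\cH$-components along $\Ext^1_\cH(-,-)\cong\Hom_\cT(-,-[1])$. Functoriality reduces to three compatibilities, all of them available above: compatibility of composition in $\cH$ with composition in $\cT$, agreement of the two bimodule structures on $\Ext^1_\cH$, and the fact that the Yoneda product $\Ext^1\times\Ext^1\to\Ext^2_\cH=0$ corresponds to the zero composition $\Hom_\cT(-,-[1])\times\Hom_\cT(-,-[1])\to\Hom_\cT(-,-[2])=0$. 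By construction $F$ is additive, restricts to the inclusion on $\cH$, commutes with $[1]$, and the $\Hom$-computation above shows it is fully faithful; its essential image is $\cE:=\add\bigl(\bigcup_n\cH[n]\bigr)$. Thus $F\colon\cD\xrightarrow{\sim}\cE$ is an equivalence of additive categories; once $F$ is shown exact, $\cE$ is a triangulated subcategory of $\cT$ containing $\cH$, and since the bounded derived category of an abelian category is idempotent complete so is $\cE$, whence $\cE=\thick$ and the theorem follows.

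\emph{Reducing exactness to two cases.} It remains to show $F$ sends distinguished triangles to distinguished triangles; as $F$ commutes with $[1]$ it is enough to show $F$ sends the cone triangle $X\xrightarrow{u}Y\to\operatorname{cone}(u)\to X[1]$ to a distinguished triangle, and since $F$ is additive, preserves isomorphisms of arrows, and sends direct sums of distinguished triangles to distinguished triangles, it suffices to treat any class of morphisms $u$ that is closed under arrow-isomorphism and finite direct sums and generates all morphisms of $\cD$ under these. A normal-form statement supplies such a class: \emph{every morphism of $\cD$ is isomorphic, in the arrow category, to a finite direct sum of shifts of morphisms of two kinds} --- (i) a morphism $A\to B$ in $\cH$, concentrated in one degree, and (ii) an extension morphism $C\xrightarrow{\xi}A[1]$ with $\xi\in\Ext^1_\cH(C,A)$. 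One proves this by induction on the combined cohomological width of source and target: splitting off the top cohomological degree presents $u$ as a block upper-triangular matrix with diagonal blocks of type (i) and of strictly smaller width, and a matrix reduction using the unipotent automorphisms built from $\Ext^1$-classes, together with the inductive hypothesis, brings $u$ into the stated form. Establishing this normal form cleanly --- or, equivalently, carrying out the corresponding octahedral induction directly --- is the main technical step, and it is precisely the bookkeeping that earlier accounts left incomplete or offloaded onto filtered triangulated categories.

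\emph{The two cases, and the role of heredity.} If $u$ is of kind (ii), $\xi$ is the class of a short exact sequence $0\to A\to E\to C\to0$ in $\cH$, which by admissibility gives a distinguished triangle $A\to E\to C\xrightarrow{\xi}A[1]$ in $\cT$; rotating this and unwinding the definition of $F$ exhibits the $F$-image of the cone triangle of $u$ as a distinguished triangle. If $u\colon A\to B$ is of kind (i), factor it through its image, $A\xrightarrow{p}\Ima u\xrightarrow{j}B$; admissibility turns $0\to\Ker u\to A\xrightarrow{p}\Ima u\to0$ and $0\to\Ima u\xrightarrow{j}B\to\Coker u\to0$ into distinguished triangles in $\cT$ with third terms $(\Ker u)[1]$ and $\Coker u$. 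Applying the octahedral axiom in $\cT$ to the composite $F(j)\circ F(p)=F(u)$ produces a distinguished triangle $F(A)\xrightarrow{F(u)}F(B)\to W\to F(A)[1]$ together with a distinguished triangle $(\Ker u)[1]\to W\to\Coker u\xrightarrow{\omega}(\Ker u)[2]$; but $\omega\in\Hom_\cT\!\bigl(\Coker u,(\Ker u)[2]\bigr)\cong\Ext^2_\cH(\Coker u,\Ker u)=0$ by heredity, so this triangle splits and identifies $W$ with $(\Ker u)[1]\oplus\Coker u=F(\operatorname{cone}u)$. What remains --- and what I expect to be the genuine obstacle, the rest being organisation of the reductions above --- is to check that such identifications are compatible with the structure maps, i.e.\ that $F$ preserves the cone triangle on the nose rather than merely identifying the cones abstractly; this naturality must be verified both here and throughout the inductive step, where the analogous higher error terms again lie in $\Ext^2$-groups and so vanish by heredity. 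It is this last point that makes the heredity hypothesis (and nothing about the ambient category) do the essential work.
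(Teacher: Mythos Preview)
Your construction of $F$ and the observation that full faithfulness and commutation with $[1]$ are automatic agree with the paper. The problem is in your reduction step: the claimed normal form for morphisms in $\cD$ is false, and this is a genuine gap rather than bookkeeping.

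Concretely, take $\cH=\operatorname{rep}(1\to 2\to 3)$, let $S_2$ be the simple at vertex~$2$, $P_2=[2,3]$ the projective at~$2$, and $S_3$ the simple at~$3$. Since $P_2$ is projective we have $\Ext^1_\cH(P_2,S_3)=0$, while $\Ext^1_\cH(S_2,S_3)=k$. Pick $f_0\colon S_2\hookrightarrow P_2$ the inclusion and $0\neq f_1\in\Ext^1_\cH(S_2,S_3)$. Then
\[
\binom{f_0}{f_1}\colon S_2 \longrightarrow P_2\oplus S_3[1]
\]
is indecomposable in the arrow category of $\cD$: the source is simple, so a decomposition forces the map to factor through a proper direct summand of the target; the only such summands are $S_3[1]$ and the images of the split monomorphisms $\binom{1}{\gamma}\colon P_2\to P_2\oplus S_3[1]$ with $\gamma\in\Ext^1_\cH(P_2,S_3)=0$. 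Neither factorisation is possible, since $f_0\neq0$ and $f_1\neq0$. This morphism is visibly neither of your type~(i) nor of type~(ii), nor a direct sum of such, and the unipotent automorphisms you invoke cannot repair this: they only modify $f_1$ by elements of $\Ima\bigl((f_0)^*\colon\Ext^1(P_2,S_3)\to\Ext^1(S_2,S_3)\bigr)=0$.

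The paper does not attempt such a decomposition. Instead it proves a \emph{Key Lemma}: given an octahedron built from $f'\colon X\to Y'$ and $f''\colon X\to Y''$, the ``folded'' triangle
\[
X\xrightarrow{\binom{-f'}{f''}} Y'\oplus Y''\longrightarrow Z\longrightarrow X[1]
\]
with the \emph{specified} structure maps is exact, provided $\Hom(Y'',Y')=0$. This both replaces your reduction (induct on the amplitude of $Y$, splitting off the bottom degree) and resolves the second difficulty you correctly flag: it controls the actual maps $Y\to Z$ and $Z\to X[1]$, not merely the isomorphism type of the cone. In particular the mixed case $X\in\cH$, $Y\in\cH\oplus\cH[1]$ --- exactly the situation of the counterexample above --- is handled directly by this lemma rather than by decomposing the morphism. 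Your octahedral argument for type~(i) is the right shape, but by itself only shows $W\cong(\Ker u)[1]\oplus\Coker u$ abstractly; the paper's lemma characterising exact triangles $X\to Y\to K[1]\oplus C\to X[1]$ via a pull-back/push-out square in $\cH$ is what pins down the maps, and its proof again rests on the Key Lemma.
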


This yields the following corollary, completing the missing implication in Theorem 1 of \cite{Ringel} (c.f. \cite[Theorem 1.2]{BvR} and \cite[Theorem 3.3]{Chen-Ringel}). It also answers in the negative the question posed in an earlier version of \cite{Chen-Ringel} about the existence of non-algebraic hereditary triangulated categories.

\begin{Cor}
Let $\cT$ be a triangulated category, and $\cH$ a full additive subcategory such that $\Hom(\cH,\cH[-n])=0$ for all $n>0$. Then the following are equivalent.
\begin{enumerate}
\item $\cH$ is hereditary abelian and the inclusion $\cH\to\cT$ can be lifted to an equivalence $\cD^b(\cH)\cong\cT$.
\item $\add\big(\bigcup_n\cH[n]\big)=\cT$.
\end{enumerate}
\end{Cor}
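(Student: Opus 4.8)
The plan is to treat the two implications separately: (1) $\Rightarrow$ (2) is essentially formal, while (2) $\Rightarrow$ (1) I would reduce to the main Theorem by recognising $\cH$ as the heart of a bounded $t$-structure on $\cT$. For (1) $\Rightarrow$ (2): if the inclusion lifts to a triangle equivalence $G\colon\cD^b(\cH)\xrightarrow{\sim}\cT$ with $\cH$ hereditary abelian, then, as recalled in the introduction, $\cD^b(\cH)$ is as an additive category the additive closure of $\bigcup_n\cH[n]$; since $G$ commutes with the shift and restricts to the identity on $\cH$, applying it yields $\cT=\add\bigl(\bigcup_n\cH[n]\bigr)$.

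For (2) $\Rightarrow$ (1), I would first put $\cT^{\le 0}=\add\bigl(\bigcup_{n\ge 0}\cH[n]\bigr)$ and $\cT^{\ge 0}=\add\bigl(\bigcup_{n\le 0}\cH[n]\bigr)$ and check that this is a bounded $t$-structure on $\cT$ with heart $\cH$. The semiorthogonality $\Hom(\cT^{\le 0},\cT^{\ge 1})=0$ is precisely the hypothesis $\Hom(\cH,\cH[-n])=0$ for $n>0$; the truncation triangle of an object $X$ is obtained by splitting off the non-negative part of a (non-canonical) decomposition $X\cong\bigoplus_n X_n[-n]$, $X_n\in\cH$, which~(2) supplies --- here one crucially uses that $\cH$ is closed under direct summands, so that summands of finite sums of shifted objects of $\cH$ are again of that form, via the triangular shape of the relevant endomorphism rings forced by the semiorthogonality. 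One then checks that $\cT^{\le 0}\cap\cT^{\ge 0}$ equals $\cH$ (an object of the heart splits as $X'\oplus B$ with $X'\in\cH$ and $B\in\cT^{\le -1}\cap\cT^{\ge 0}=0$), so that $\cH$ is abelian, and the admissibility hypothesis of the main Theorem is then the usual compatibility, for a heart, between short exact sequences in $\cH$ and distinguished triangles in $\cT$.

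The crux is to show that $\cH$ is hereditary, which I would do by proving $\Hom_\cT(A,B[2])=0$ for all $A,B\in\cH$. Given $\xi\colon A\to B[2]$, complete it to a distinguished triangle $B[1]\xrightarrow{f}Z\xrightarrow{g}A\xrightarrow{\xi}B[2]$. Applying the cohomological functors $H^n$ of the $t$-structure and using $A,B\in\cH$ gives $H^0(Z)\cong A$, $H^{-1}(Z)\cong B$ and $H^n(Z)=0$ otherwise, so by~(2) we get $Z\cong A\oplus B[1]$. With respect to this decomposition the $A$-component of $f$ and the $B[1]$-component of $g$ lie in $\Hom_\cT(B[1],A)=\Hom_\cT(B,A[-1])=0$, so in particular $g=g_1\circ\pi_A$ with $g_1\in\End_\cT(A)$ and $\pi_A\colon A\oplus B[1]\to A$ the projection. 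Applying $\Hom_\cT(-,A)$ to the triangle, the neighbours of $g^{*}$ in the long exact sequence are $\Hom_\cT(B[2],A)=\Hom_\cT(B,A[-2])=0$ and $\Hom_\cT(B[1],A)=0$, so $g^{*}\colon\End_\cT(A)\to\Hom_\cT(A\oplus B[1],A)\cong\End_\cT(A)$ is bijective; under the identification it is right multiplication by $g_1$, so $ug_1=1$ for some $u$, and then $(g_1u-1)g_1=0$ forces $g_1u=1$ as well, i.e.\ $g_1$ is invertible. Hence $g$ is a split epimorphism and therefore $\xi=0$. Since, for the heart of a $t$-structure, the natural map $\Ext^2_\cH(A,B)\to\Hom_\cT(A,B[2])$ is injective, this vanishing shows that $\cH$ has global dimension at most $1$.

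With $\cH$ now an admissible hereditary abelian subcategory, the main Theorem provides a triangle equivalence $\cD^b(\cH)\xrightarrow{\sim}\thick$ lifting the inclusion; as $\thick$ contains every $\cH[n]$ and is closed under finite direct sums and direct summands, it contains $\add\bigl(\bigcup_n\cH[n]\bigr)=\cT$, whence $\thick=\cT$ and (1) follows. I expect the heredity claim $\Hom_\cT(A,B[2])=0$ to be the main obstacle: it is the one point where hypothesis~(2) enters essentially --- beyond merely producing the truncation triangles --- forcing one to unwind the decomposition of the cone $Z$ rather than argue purely formally; verifying that the precise admissibility hypothesis of the main Theorem holds for the heart is a secondary and purely formal matter.
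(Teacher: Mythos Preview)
Your overall architecture matches the paper's: pass through a bounded $t$-structure on $\cT$ with heart $\cH$, verify the hypotheses of the main theorem, and then observe $\thick=\cT$. The paper packages this by adding a third equivalent condition, namely that $\cH$ is the heart of a \emph{split} bounded $t$-structure, and proves the cycle $(1)\Rightarrow(2)\Rightarrow(3)\Rightarrow(1)$. The substantive difference is in how heredity is obtained. Once the truncation triangles are seen to split (which you also use, implicitly, at the step ``by~(2) we get $Z\cong A\oplus B[1]$''), the paper simply invokes its earlier observation that a split $t$-structure satisfies $\Hom(\cT^{>0},\cT^{<0})=0$; since $\cH[-1]\subset\cT^{>0}$ and $\cH[n-1]\subset\cT^{<0}$ for every $n\ge2$, this gives $\Hom_\cT(\cH,\cH[n])=0$ for all $n\ge2$ in one stroke. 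Your cone argument is correct and pleasantly explicit, but it is working harder than necessary, since the decomposition of $Z$ you rely on is precisely the split property.

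One point to tighten: the main theorem requires $\cH$ to be an \emph{admissible hereditary} subcategory, meaning $\Hom_\cT(\cH,\cH[n])=0$ for all $n\neq0,1$, not merely that $\cH$ has global dimension at most one. You prove only $\Hom_\cT(A,B[2])=0$ and then pass to $\Ext^2_\cH=0$; the cases $n\ge3$ are still needed. Your cone argument extends verbatim (for $\xi\colon A\to B[n]$ the cone is $A\oplus B[n-1]$ and the $B[n-1]$-component of $g$ lies in $\Hom_\cT(B,A[1-n])=0$), or you can get all $n\ge2$ at once from splitness as above. Both you and the paper are brisk about why summands of $\bigoplus_n Y_n[n]$ again decompose in this graded form and why the heart is exactly $\cH$; this is where the ``triangular endomorphism ring'' remark does real work, and it is worth spelling out that the diagonal idempotents split because the ambient idempotent on the direct sum does.
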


We also have the following characterisation in terms of the existence of a split t-structure (c.f. \cite[Proposition 4.2]{BvR}).

\begin{Cor}
A triangulated category is equivalent to the bounded derived category of an hereditary abelian category if and only if it admits a split, bounded t-structure.
\end{Cor}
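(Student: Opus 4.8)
The plan is to deduce this from the preceding corollary, taking the heart of the t-structure as the category $\cH$. For the ``only if'' direction, start from an equivalence $\cT\simeq\cD^b(\cH)$ with $\cH$ hereditary abelian and transport the standard t-structure along it; it is plainly bounded, so the only thing to check is that it is split. For this it is enough to recall that when $\cH$ is hereditary every bounded complex is, non-canonically, the direct sum of its shifted cohomology objects: in the truncation triangle $\tau_{\le n}X\to X\to\tau_{>n}X\to\tau_{\le n}X[1]$ the connecting morphism lies in a finite sum of groups $\Hom_{\cD^b(\cH)}(H^j(X),H^i(X)[j-i+1])\cong\Ext^{j-i+1}_{\cH}(H^j(X),H^i(X))$ with $j>i$; since $j-i+1\ge 2$ and $\cH$ is hereditary these vanish, so each truncation triangle splits and the t-structure is split.

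For the ``if'' direction, suppose $\cT$ admits a split bounded t-structure and let $\cH$ be its heart. I would first record the standard facts that $\cH$ is a full additive subcategory of $\cT$, closed under direct summands, equipped with the abelian structure in which short exact sequences correspond to triangles, and that $\Hom_\cT(\cH,\cH[-n])=0$ for all $n>0$; all of this is immediate from the t-structure axioms. Boundedness of the t-structure means every object of $\cT$ has only finitely many nonzero cohomology objects, all lying in $\cH$, and splitness means every object is isomorphic to the direct sum of its shifted cohomology objects; combining the two yields $\add\bigl(\bigcup_n\cH[n]\bigr)=\cT$. Now the preceding corollary applies directly and shows that $\cH$ is hereditary abelian and that the inclusion $\cH\hookrightarrow\cT$ lifts to an equivalence $\cD^b(\cH)\simeq\cT$, which is exactly what we want.

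I do not expect any genuine obstacle here: the mathematical substance is already contained in the main theorem and the preceding corollary, and what is left is an unwinding of definitions. The two points deserving a careful sentence are both used above — that ``split'', phrased as the splitting of truncation triangles, coincides with the cohomological direct-sum decomposition (via the standard isomorphism $\Hom_{\cD^b(\cH)}(A,B[k])\cong\Ext^k_{\cH}(A,B)$ for $A,B\in\cH$ invoked in the first paragraph), and that the intrinsic abelian structure on the heart is the one with respect to which the preceding corollary is stated. Neither is difficult.
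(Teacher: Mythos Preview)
Your proposal is correct and follows essentially the same route as the paper. The paper packages this result into the three-way equivalence of \hyperref[Cor:Main]{Corollary~\ref*{Cor:Main}} (conditions (1), (2), (3)) and proves the cycle (1)$\Rightarrow$(2)$\Rightarrow$(3)$\Rightarrow$(1); your ``if'' direction is exactly the paper's (3)$\Rightarrow$(2)$\Rightarrow$(1), and your ``only if'' direction replaces the paper's detour through (2) by the direct computation that the connecting maps in the truncation triangles land in $\Ext^{\ge 2}_\cH$ and hence vanish---which is of course the same content.
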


Observe that, using the result of Keller and Vossieck, we can actually weaken the assumption on $\cH$ to just being exact, rather than abelian. It is therefore plausible that our main theorem could also extend to this situation.

We also remark that Neeman has an alternative approach to this (and other) problems. In \cite{Neeman1} he introduces a new axiom scheme for triangulated categories, which in particular ensure that the cone of a morphism is unique up to canonical isomorphism. In this setting, if $\cH$ is the heart of a t-structure on such a triangulated category $\cT$, then there is always a functor $\cD^b(\cH)\to\cT$ extending the inclusion $\cH\to\cT$.

We include two appendices. The first gives a useful lemma concerning direct sums of exact triangles which we use several times in the proof; the second collects some equivalent formulations of the Octahedral Axiom.

I would like to thank Henning Krause for drawing my attention to this problem, and to Henning, Greg Stevenson and Paul Balmer for interesting discussions on this (and other) issues. I would also like to thank Adam van Roosmalen for outlining the proof in the algebraic triangulated category setting which we give here.

\section{Admissible subcategories}

Let $\cT$ be a triangulated category. An admissible subcategory $\cC\subset\cT$ is a full additive subcategory satisfying
\[ \Hom_\cT(\cC,\cC[-n])=0 \quad\textrm{for all }n>0, \]
and closed under extensions, so given an exact triangle
\[ \begin{tikzcd} X \arrow{r} & Y \arrow{r} & Z \arrow{r} & X[1] \end{tikzcd} \]
with $X,Z\in\cC$, then also $Y\in\cC$.

\begin{Thm}[\cite{Dyer}]
An admissible subcategory $\cC\subset\cT$ inherits the structure of an exact category,  by taking as exact sequences in $\cC$
\[ \begin{tikzcd} 0 \arrow{r} & X \arrow{r}{f} & Y \arrow{r}{g} & Z \arrow{r} & 0 \end{tikzcd} \]
those for which there exists an exact triangle $(f,g,h)$ in $\cT$
\[ \begin{tikzcd} X \arrow{r}{f} & Y \arrow{r}{g} & Z \arrow{r}{h} & X[1] \end{tikzcd} \]
In this case there is an isomorphism $\Ext^1_\cC(Z,X)\xrightarrow\sim\Hom_\cT(Z,X[1])$ sending the class of $(f,g)$ to the morphism $h$.
\end{Thm}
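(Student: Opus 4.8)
The plan is to verify directly that the proposed class $\mathcal E$ of sequences makes the (already additive) category $\cC$ into an exact category in the sense of Quillen, and then to read off the description of $\Ext^1_\cC$. Two features of $\cC$ carry essentially all the weight. First, because $\Hom_\cT(\cC,\cC[-n])=0$ for every $n>0$, applying $\Hom_\cT(C,-)$ and $\Hom_\cT(-,C)$ with $C\in\cC$ to an exact triangle $X\xrightarrow f Y\xrightarrow g Z\xrightarrow h X[1]$ having $X,Z\in\cC$ annihilates the groups $\Hom_\cT(C,Z[-1])$ and $\Hom_\cT(X[1],C)$, leaving the exact sequences
\[ 0\to\Hom_\cT(C,X)\to\Hom_\cT(C,Y)\to\Hom_\cT(C,Z),\qquad 0\to\Hom_\cT(Z,C)\to\Hom_\cT(Y,C)\to\Hom_\cT(X,C). \]
Hence $f$ is a kernel of $g$, and $g$ a cokernel of $f$, in $\cC$, so $(f,g)$ genuinely is a short exact sequence. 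Second, $\cC$ is closed under extensions, so whenever we manufacture an exact triangle in $\cT$ whose two outer terms lie in $\cC$, its middle term lies in $\cC$ automatically. Granting these, the elementary axioms are immediate: $\mathcal E$ is closed under isomorphism; $\mathrm{id}_X$ is both an inflation and a deflation, via $X\xrightarrow{1}X\to 0\to X[1]$ and its rotation; split exact sequences lie in $\mathcal E$, being direct sums of the trivial triangles on $X$ and $Z$; and a finite direct sum of members of $\mathcal E$ again lies in $\mathcal E$, this being precisely the lemma on direct sums of exact triangles proved in the first appendix.

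Next come the composition axioms. Given deflations $Y'\xrightarrow{p'}Y$ and $Y\xrightarrow p Z$, apply the Octahedral Axiom to the composite $Y'\xrightarrow{p'}Y\xrightarrow p Z$: writing $K'=\Ker p'$ and $K=\Ker p$, the cones of $p'$, $p$ and $pp'$ sit in an exact triangle $K'[1]\to\mathrm{cone}(pp')\to K[1]\to K'[2]$, so $\mathrm{cone}(pp')[-1]$ is an extension of $K$ by $K'$ and hence lies in $\cC$; rotating the triangle on $pp'$ then displays $pp'$ as a deflation with kernel $\mathrm{cone}(pp')[-1]$. Composition of inflations is handled dually.

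The heart of the argument — and the step I expect to cause the most trouble, mostly through the bookkeeping of octahedral diagrams and signs — is the pushout axiom and its dual. Let $X\xrightarrow i Y$ be an inflation with chosen completion $X\xrightarrow i Y\xrightarrow p Z\xrightarrow w X[1]$, and let $a\colon X\to X'$ be arbitrary in $\cC$. Form the homotopy pushout: complete $\binom{-i}{a}\colon X\to Y\oplus X'$ to an exact triangle $X\to Y\oplus X'\xrightarrow{(j,\,q)}Y'\to X[1]$, so that $ji=qa$; by a standard property of homotopy pushout squares (itself a consequence of the Octahedral Axiom) the cofibre of $q\colon X'\to Y'$ is isomorphic to the cofibre $Z$ of $i$, that is, there is an exact triangle $X'\xrightarrow q Y'\to Z\to X'[1]$. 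Since $X',Z\in\cC$ this forces $Y'\in\cC$ and exhibits $q$ as an inflation with cokernel $Z$. Finally the universal property of the pushout inside $\cC$ is read off the first triangle: a pair of morphisms out of $Y$ and $X'$ agreeing on $X$ is the same as a morphism out of $Y\oplus X'$ whose composite with $\binom{-i}{a}$ is zero, hence factors through $(j,\,q)$, and this factorisation is unique because any morphism $Y'\to W$ with $W\in\cC$ whose composite with $(j,\,q)$ is zero must factor through the connecting map $Y'\to X[1]$, while $\Hom_\cT(X[1],W)=\Hom_\cT(X,W[-1])=0$. So the pushout of an inflation along any morphism exists in $\cC$ and is an inflation; the dual construction with homotopy pullbacks settles the pullback of a deflation, and $\cC$ is an exact category.

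It remains to identify $\Ext^1_\cC(Z,X)$ with $\Hom_\cT(Z,X[1])$. Send the class of a conflation $X\xrightarrow f Y\xrightarrow g Z$ to the third morphism $h$ of any completing triangle $X\xrightarrow f Y\xrightarrow g Z\xrightarrow h X[1]$. This is well defined: comparing two completions via the triangle axioms, the resulting automorphism of $Z$ differs from the identity by a morphism of the form $Z\to X[1]\to Z$, and $\Hom_\cT(X[1],Z)=\Hom_\cT(X,Z[-1])=0$; the same vanishing shows that equivalent conflations yield the same $h$. The assignment is surjective, since any $h\colon Z\to X[1]$ completes to an exact triangle whose middle term lies in $\cC$ by extension-closure, hence defines a conflation by the first paragraph, and injective, since $h=0$ makes the triangle, and therefore the conflation, split. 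Additivity follows from the compatibility of the pushout and pullback constructions above with this assignment — pushing the triangle $(f,g,h)$ out along $a\colon X\to X'$ produces third morphism $a[1]\circ h$, and pulling it back along $c\colon Z'\to Z$ produces $h\circ c$ — so Baer sums are sent to sums in $\Hom_\cT(Z,X[1])$, and we obtain the claimed isomorphism of abelian groups.
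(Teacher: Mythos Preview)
The paper does not give its own proof of this theorem: it is quoted from \cite{Dyer} and only the compatibility of the isomorphism $\Ext^1_\cC\cong\Hom_\cT(-,-[1])$ with pull-backs and push-outs is recorded afterwards. So there is no argument in the paper to compare yours against.

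That said, your proof is correct and is essentially the standard verification. A few remarks. In the well-definedness step for $h$, your phrasing is slightly compressed: what you actually use is that if $(f,g,h)$ and $(f,g,h')$ are both exact, then the automorphism $\theta$ of $Z$ supplied by (TR3) satisfies $(\theta-1)g=0$, hence $\theta-1$ factors through $h\colon Z\to X[1]$, and then $\Hom_\cT(X[1],Z)=0$ forces $\theta=1$ and $h'=h$. In the pushout step, the ``standard property of homotopy pushout squares'' you invoke is exactly an instance of the form (TR4) recorded in the paper's second appendix (apply it to the composite $X'\hookrightarrow Y\oplus X'\xrightarrow{(j,q)}Y'$), so you may wish to cite that rather than leave it implicit. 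Finally, your observation that push-out along $a$ turns $h$ into $a[1]\circ h$ and pull-back along $c$ turns $h$ into $h\circ c$ is precisely what the paper records in the paragraph following the theorem, so your identification of the Baer sum with addition in $\Hom_\cT(Z,X[1])$ is in line with how the result is used later.
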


Note that the isomorphisms $\Ext^1_\cC(Z,X)\xrightarrow{\sim}\Hom_\cT(Z,X[1])$ are compatible with pull-backs and push-outs. Explicitly, if $\epsilon\in\Ext^1_\cC(Z,X)$ and $\theta\colon Z'\to Z$, then the pull-back $\epsilon\theta\in\Ext^1_\cC(Z',X)$ is sent to the composition $\epsilon\theta\in\Hom_\cT(Z',X[1])$, and similarly if $\phi\colon X\to X'$, then the push-out $\phi\epsilon\in\Ext^1_\cC(Z,X')$ is sent to the composition $\phi[1]\epsilon\in\Hom_\cT(Z,X'[1])$.

\begin{Prop}[\cite{BBD}]
The exact structure on an admissible subcategory $\cC$ is abelian if and only if, for every map $f\colon X\to Y$ in $\cC$, there exist exact triangles
\[ \begin{tikzcd} S \arrow{r}{x} & X \arrow{r}{f} & Y \arrow{r}{y} & S[1] \end{tikzcd} \]
and
\[ \begin{tikzcd} S \arrow{r}{c[-1]} & C[-1] \arrow{r}{\theta} & K[1] \arrow{r}{k[1]} & S[1] \end{tikzcd} \]
in $\cT$ such that $xk\colon K\to X$ is a kernel for $f$ and $cy\colon Y\to C$ is a cokernel for $f$.

In this case we say that $\cC$ is an admissible abelian subcategory.
\end{Prop}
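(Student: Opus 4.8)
The plan is to treat the two implications separately, using throughout the standard characterisation that an additive category possessing all kernels, cokernels and finite biproducts is abelian exactly when, for every morphism, the canonical comparison map from the coimage to the image is an isomorphism. Since the stated condition already supplies a kernel $xk\colon K\to X$ and a cokernel $cy\colon Y\to C$ for each $f\in\cC$, and $\cC$ is additive, in the direction $\Leftarrow$ all finite limits and colimits are at hand and only this comparison needs checking; in the direction $\Rightarrow$ the Octahedral Axiom, applied to the factorisation of $f$ through its image, produces the second triangle.

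\textbf{Direction $\Leftarrow$.} First I would show that every monomorphism in $\cC$ is an admissible monomorphism. If $f\colon X\to Y$ is monic then its kernel is zero, so by uniqueness of kernels the object $K$ in the stated condition is zero; the second triangle then forces $c[-1]\colon S\to C[-1]$ to be an isomorphism, hence the cone $S[1]\cong C$ of $f$ lies in $\cC$. Rotating the first triangle gives an exact triangle $X\xrightarrow f Y\xrightarrow y S[1]\to X[1]$ with all terms in $\cC$, so by Dyer's theorem $0\to X\xrightarrow f Y\xrightarrow y S[1]\to 0$ is exact in $\cC$ and $f$ is an admissible monomorphism. Dually, every epimorphism is an admissible epimorphism (now $C\cong 0$, forcing $K\cong S\in\cC$, and the first triangle becomes an exact sequence $0\to S\to X\xrightarrow f Y\to 0$). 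In particular the inherited exact structure consists of all short exact sequences. Now let $f\colon X\to Y$ be arbitrary, put $e\colon X\to X'$ for the cokernel of its kernel $xk\colon K\to X$, and factor $f=\bar f\circ e$; here $e$ is an admissible epimorphism. I would check $\bar f$ is monic: given $u\colon Z\to X'$ with $\bar f u=0$, pull the admissible epimorphism $e$ back along $u$ to obtain an admissible epimorphism $q\colon\tilde Z\to Z$ and a map $t\colon\tilde Z\to X$ with $et=uq$; then $ft=\bar f et=\bar f uq=0$, so $t$ factors through $\Ker f=K$, whence $uq=et=0$ and therefore $u=0$ since $q$ is epic. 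Thus $\bar f$ is an admissible monomorphism, its cokernel equals $\Coker f$, and so the coimage $\Coker(xk)=e$ and the image $\Ker(\Coker f)=\bar f$ have the same underlying object $X'$ with identity comparison map. Hence $\cC$ is abelian.

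\textbf{Direction $\Rightarrow$.} Assume $\cC$ is abelian, so that every short exact sequence in $\cC$ is a conflation in the inherited exact structure. Given $f\colon X\to Y$, complete it to an exact triangle $S\xrightarrow x X\xrightarrow f Y\xrightarrow y S[1]$, which is the first required triangle, and factor $f$ through its image as $X\xrightarrow p I\xrightarrow m Y$. The short exact sequences $0\to\Ker f\to X\xrightarrow p I\to 0$ and $0\to I\xrightarrow m Y\to\Coker f\to 0$ are conflations, hence realised by exact triangles whose cones are $(\Ker f)[1]$ and $\Coker f$. Applying the Octahedral Axiom to $f=m\circ p$ yields an exact triangle on the three cones, which after rotation takes the form $S\to(\Coker f)[-1]\to(\Ker f)[1]\to S[1]$: this is the second required triangle, with $K=\Ker f$ and $C=\Coker f$. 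Finally, the commutative squares built into the Octahedral Axiom identify the composite $K\to S\xrightarrow x X$ with the kernel inclusion and the composite $Y\xrightarrow y S[1]\to C$ with the cokernel projection (up to sign), so $xk$ is a kernel and $cy$ a cokernel of $f$, as required.

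\textbf{Main obstacle.} I expect the delicate point to be the last step of the direction $\Rightarrow$: it is immediate that the three cones sit in an exact triangle, but to finish one must exploit the compatibilities among the four triangles in the octahedron in order to recognise the relevant composites as the kernel and cokernel \emph{maps}, not merely to recognise $K$ and $C$ as the kernel and cokernel objects. This is presumably why the paper includes an appendix collecting formulations of the Octahedral Axiom. In the direction $\Leftarrow$ the only subtle points are the pull-back argument showing $\bar f$ monic and the initial observation that for a monomorphism the supplied kernel object is forced to vanish, which is exactly what makes its cone land in $\cC$.
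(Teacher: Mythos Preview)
Your proof is correct and follows the same approach as the paper: for $\Rightarrow$ both apply the Octahedral Axiom to the image factorisation $f=mp$ to produce the triangle on cones, and for $\Leftarrow$ both observe that $K=0$ for a monomorphism forces its cone into $\cC$ (and dually for epimorphisms). The paper's $\Leftarrow$ is marginally shorter, stopping once every monomorphism is the kernel of its cokernel and every epimorphism the cokernel of its kernel; your additional coimage-to-image verification via the pull-back argument is correct but not needed.
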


\begin{proof}
Suppose $\cC$ is abelian. Given $f\colon X\to Y$, we can factor it as $f=f_2f_1$ via its image $I$, as well as take a kernel $\iota\colon K\to X$ and a cokernel $\pi\colon Y\to C$. Now apply (TR4) to get the exact commutative diagram
\[ \begin{tikzcd}
X \arrow{r}{f_1} \arrow[-, double equal sign distance]{d} & I \arrow{r} \arrow{d}{f_2} & K[1] \arrow{r}{-\iota} \arrow{d}{k[1]} & X[1] \arrow[-, double equal sign distance]{d}\\
X \arrow{r}{f} & Y \arrow{r}{y} \arrow{d}{\pi} & S[1] \arrow{r}{-x} \arrow{d}{c} & X[1]\\
& C \arrow[-, double equal sign distance]{r} \arrow{d} & C \arrow{d}{\theta[1]}\\
& I[1] \arrow{r} & K[2]
\end{tikzcd} \]
This yields the two exact triangles in $\cT$ of the required form.

Conversely, suppose that we have such triangles for every map in $\cC$. Then $\cC$ is already preabelian, so let $f\colon X\to Y$ be a monomorphism. Since its kernel is zero we have a triangle in $\cT$
\[ \begin{tikzcd} X \arrow{r}{f} & Y \arrow{r}{g} & C \arrow{r} & X[1] \end{tikzcd} \]
where $g$ is a cokernel for $f$. It follows that the pair $(f,g)$ is exact in $\cC$, and hence $f$ is a kernel for $g$. Dually, every epimorphism is the cokernel of its kernel, so the exact category $\cC$ is abelian with its usual exact structure.
\end{proof}

Note that being an admissible abelian subcategory is stronger than being an admissible subcategory which is abelian, since it may be the case that the exact structure coming from the embedding into the triangulated category is not the exact structure coming from all kernel-cokernel pairs.

A t-structure $(\cT^{\geq0},\cT^{\leq0})$ on a triangulated category $\cT$ is a pair of full subcategories satisfying the following axioms
\begin{enumerate}
\item[(t1)] $\Hom(\cT^{\leq0},\cT^{>0})=0$.
\item[(t2)] $\cT^{\geq1}\subset\cT^{\geq0}$ and $\cT^{\leq0}\subset\cT^{\leq1}$.
\item[(t3)] for all $X$ there exists an exact triangle
\[ \begin{tikzcd} X \arrow{r} & X^+ \arrow{r}{\delta_X} & X^- \arrow{r} & X[1] \end{tikzcd} \]
with $X^+\in\cT^{>0}$ and $X^-\in\cT^{<0}$.
\end{enumerate}
Here we have written $\cT^{\geq n}:=\cT^{\geq0}[-n]$ and $\cT^{>n}:=\cT^{\geq n+1}$, and similarly $\cT^{\leq n}:=\cT^{\leq0}[-n]$ and $\cT^{<n}:=\cT^{\leq n-1}$. It follows that $(\cT^{\geq n},\cT^{\leq n})$ is again a t-structure for any $n$. Note also that $\cT^{\geq0}$ and $\cT^{\leq0}$ are closed under extensions and direct summands.

The t-structure is said to be bounded provided that
\[ \cT = \bigcup_n\cT^{[-n,n]}, \quad\textrm{where}\quad \cT^{[a,b]} := \cT^{\geq a}\cap\cT^{\leq b} \quad\textrm{for }a\leq b, \]
and split provided that $\delta_X=0$ for all $X$, in which case $X\cong X^+\oplus X^-[-1]$.

\begin{Thm}[\cite{BBD}]
If $(\cT^{\geq0},\cT^{\leq0})$ is a t-structure on $\cT$, then its heart $\cH:=\cT^{\geq0}\cap\cT^{\leq0}$ is an admissible abelian subcategory of $\cT$.
\end{Thm}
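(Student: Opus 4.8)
The plan is to verify the criterion of the preceding Proposition, after first checking that $\cH$ is an admissible subcategory. The latter is immediate from the axioms: as an intersection of two full subcategories which are closed under extensions and direct summands and contain $0$, the heart $\cH=\cT^{\geq0}\cap\cT^{\leq0}$ is a full additive subcategory closed under extensions; and since $\cH\subseteq\cT^{\leq0}$ while $\cH[-n]\subseteq\cT^{\geq0}[-n]=\cT^{\geq n}\subseteq\cT^{>0}$ for $n>0$, axiom (t1) gives $\Hom_\cT(\cH,\cH[-n])=0$.

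The substance of the proof is the construction of the two exact triangles. Fix $f\colon X\to Y$ in $\cH$ and complete it to an exact triangle $S\xrightarrow{x}X\xrightarrow{f}Y\xrightarrow{y}S[1]$. Rotating this triangle and using that $\cT^{\geq0}$ and $\cT^{\leq0}$ are extension-closed, one finds $S\in\cT^{[0,1]}$, equivalently $S[1]\in\cT^{[-1,0]}$: the rotation $Y[-1]\to S\to X\to Y$ forces $S\in\cT^{\geq0}$, and $Y\to S[1]\to X[1]\to Y[1]$ forces $S[1]\in\cT^{\leq0}$. Applying (t3) to $S[1]$ for the t-structure $(\cT^{\geq-1},\cT^{\leq-1})$ — the canonical truncation triangle of $S[1]$ cutting between degrees $-1$ and $0$ — yields an exact triangle $K[1]\xrightarrow{k[1]}S[1]\xrightarrow{c}C\to K[2]$ with $K[1]\in\cT^{\leq-1}$ and $C\in\cT^{\geq0}$, and since $S[1]\in\cT^{[-1,0]}$ a further application of extension-closure pins down $K[1]\in\cT^{[-1,-1]}$ and $C\in\cT^{[0,0]}$, so that $K,C\in\cH$. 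Rotating, this is a triangle $S\xrightarrow{c[-1]}C[-1]\xrightarrow{\theta}K[1]\xrightarrow{k[1]}S[1]$ of exactly the shape the Proposition asks for.

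It then remains to check that $xk\colon K\to X$ is a kernel for $f$ and $cy\colon Y\to C$ a cokernel, where $k\colon K\to S$ and $c\colon S[1]\to C$ are the truncation maps. Since $fx=0$ and $yf=0$, we have $f\circ xk=0$ and $cy\circ f=0$. Given $g\colon Z\to X$ with $Z\in\cH$ and $fg=0$, applying $\Hom_\cT(Z,-)$ to the first triangle factors $g$ through $x$, and the vanishings $\Hom_\cT(Z,C[-1])=\Hom_\cT(Z,C[-2])=0$ (these shifts of $C$ lying in $\cT^{>0}$ and $Z$ in $\cT^{\leq0}$) together with $\Hom_\cT(Z,Y[-1])=0$ then let $g$ factor uniquely through $xk$; this is the universal property of the kernel. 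The cokernel assertion is dual, built from exactness of $\Hom_\cT(-,W)$ on the two triangles and the vanishings $\Hom_\cT(K[1],W)=\Hom_\cT(K[2],W)=\Hom_\cT(X[1],W)=0$ for $W\in\cH$, valid because these shifts lie in $\cT^{<0}$. With both triangles produced, the Proposition yields that $\cH$ is an admissible abelian subcategory.

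I do not expect any genuine difficulty here: this is the classical argument that the heart of a t-structure is abelian, reorganised so as to output precisely the data the Proposition demands. The only care required is in the degree bookkeeping — confirming that $S[1]$ really lands in $\cT^{[-1,0]}$ and that the triangle coming from (t3) is indeed the truncation triangle of $S[1]$ (so that the needed truncations exist purely formally from the axioms) — and in checking the short list of $\Hom$-vanishings that drive the two universal properties.
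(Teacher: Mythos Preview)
Your proposal is correct and follows essentially the same approach as the paper's proof: both verify the criterion of the preceding Proposition by completing $f$ to a triangle with cone $S[1]$, observing $S\in\cT^{[0,1]}$, and then using the truncation triangle from (t3) to produce $K,C\in\cH$ and the required second triangle, before checking the kernel/cokernel universal properties via the two-step factoring (through $S$ and then through $K$, respectively dually). The only cosmetic difference is that the paper applies (t3) to $S$ for the original t-structure (obtaining $S^+\cong C[-1]$, $S^-\cong K[1]$ directly in the Proposition's orientation), whereas you apply it to $S[1]$ for the shifted t-structure and then rotate; the two are equivalent.
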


\begin{proof}
It is clear that the heart $\cH$ is an admissible subcategory. Given $f\colon X\to Y$ with $X,Y\in\cH$, let its cone be $S[1]$. Then $S\in\cT^{[0,1]}$, so applying (t3) gives $S^+\cong C[-1]$ and $S^-\cong K[1]$ for some $C,K\in\cH$. Now if a morphism $g\colon Z\to X$ in $\cH$ satisfies $fg=0$, then $g$ factors uniquely through $S\to X$, and then uniquely through $K\to S$, so that $K\to X$ is a kernel for $f$. Similarly $Y\to C$ is a cokernel for $f$, so $\cH$ is admissible abelian by the previous proposition.
\end{proof}

\begin{Prop}
A t-structure is split if and only if $\Hom(\cT^{>0},\cT^{<0})=0$.
\end{Prop}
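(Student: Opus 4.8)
The plan is to prove the two implications of the equivalence separately. One direction is immediate: if $\Hom(\cT^{>0},\cT^{<0})=0$, then for every object $X$ the middle morphism $\delta_X\colon X^+\to X^-$ of the triangle supplied by (t3) lies in $\Hom(\cT^{>0},\cT^{<0})$, hence vanishes, so the t-structure is split.

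For the converse, assume the t-structure is split. I would first record the two auxiliary $\Hom$-vanishings needed, both consequences of (t1)--(t2): $\Hom(\cT^{\leq0},\cT^{\geq1})=0$ (this is (t1), since $\cT^{>0}=\cT^{\geq1}$), and $\Hom(\cT^{\leq-1},\cT^{\geq1})=0$ — the latter because $\cT^{\leq-1}=\cT^{\leq0}[1]$ while $\cT^{\geq1}[-1]=\cT^{\geq2}\subseteq\cT^{\geq1}$; note also that $Z\in\cT^{<0}$ forces $Z[-1]\in\cT^{\leq0}$. Now fix $f\colon P\to N$ with $P\in\cT^{>0}=\cT^{\geq1}$ and $N\in\cT^{<0}=\cT^{\leq-1}$; the goal is $f=0$. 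Complete $f$ to an exact triangle $X\xrightarrow{a}P\xrightarrow{f}N\to X[1]$, and apply (t3) to $X$ to obtain an exact triangle $X\xrightarrow{\alpha}X^+\xrightarrow{\delta_X}X^-\xrightarrow{\gamma}X[1]$ with $X^+\in\cT^{>0}$ and $X^-\in\cT^{<0}$. Since the t-structure is split, $\delta_X=0$, and an exact triangle whose middle morphism vanishes has its first morphism a split epimorphism; thus $\alpha$ is (split) epi.

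The core of the proof is then a short chase showing that $a$ and $\alpha$ factor through one another. Rotating the first triangle to $N[-1]\xrightarrow{u}X\xrightarrow{a}P$, the composite $\alpha u\colon N[-1]\to X^+$ lies in $\Hom(\cT^{\leq0},\cT^{\geq1})=0$; as $a$ is a weak cokernel of $u$ this yields a factorisation $\alpha=\alpha'a$ with $\alpha'\colon P\to X^+$. Rotating the second triangle to $X^-[-1]\xrightarrow{-\gamma[-1]}X\xrightarrow{\alpha}X^+$ and using $X^-[-1]\in\cT^{\leq0}$, the composite $a\circ(\gamma[-1])\colon X^-[-1]\to P$ lies in $\Hom(\cT^{\leq0},\cT^{\geq1})=0$; as $\alpha$ is a weak cokernel of $-\gamma[-1]$ this yields $a=a'\alpha$ with $a'\colon X^+\to P$. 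Hence $a=a'\alpha=a'\alpha'a$, so $(\mathrm{id}_P-a'\alpha')a=0$; since $f$ is a weak cokernel of $a$, we obtain $\mathrm{id}_P-a'\alpha'=g'f$ for some $g'\colon N\to P$, and this $g'$ lies in $\Hom(\cT^{\leq-1},\cT^{\geq1})=0$. Therefore $a'\alpha'=\mathrm{id}_P$, so $a'$ is (split) epi. Finally $fa'\alpha=fa=0$ together with $\alpha$ epi gives $fa'=0$, whence $f=f\,\mathrm{id}_P=fa'\alpha'=0$.

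I do not anticipate a serious obstacle: the converse direction is just a finite diagram chase. The only points demanding care are keeping the shift conventions straight while verifying the auxiliary $\Hom$-vanishings, and applying the weak-(co)kernel property of exact triangles with the correct variance. Conceptually, the triangle $X\to P\to N\to X[1]$ already has the shape of the truncation triangle of $X$ coming from (t3), so by uniqueness of truncation triangles it is isomorphic as a triangle to $X\to X^+\xrightarrow{\delta_X}X^-\to X[1]$, and $\delta_X=0$ then forces $f=0$ directly; the chase above simply makes this observation self-contained, using nothing beyond (t1)--(t3) and the long exact sequences attached to exact triangles.
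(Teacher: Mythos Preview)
Your proof is correct. The easy direction matches the paper's one-line observation, and your converse is a clean diagram chase that only uses (t1) and the weak cokernel property of exact triangles; each step checks out, including the final cancellation (split epi implies epi).

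The paper's argument for the converse starts from the same place---it also looks at the fibre of $f$ and applies the split hypothesis there---but finishes differently. Writing the fibre as an honest direct sum $Z^+\oplus Z^-[-1]$, the paper observes that the maps $Y[-1]\to Z$ and $Z\to X$ in the rotated triangle have vanishing cross-components by (t1), and then invokes the appendix lemma on direct-sum decompositions of triangles (Lemma~\ref{Lem:split}) to force the triangle to split further and conclude $f=0$. Your route instead keeps the truncation as a triangle and factors $a$ and $\alpha$ through one another via weak cokernels; as you note, this is really a hands-on proof that the (t3) triangle for $X$ is unique, applied to the two candidates $X\to P\to N$ and $X\to X^+\to X^-$. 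Your version is self-contained and avoids the appendix lemma; the paper's version is shorter once that lemma is in hand and highlights a structural tool used elsewhere in the article.
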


\begin{proof}
The condition is clearly sufficient, so suppose that the t-structure is split and take $f\colon X\to Y$ with $X\in\cT^{>0}$ and $Y\in\cT^{<0}$. Set $Z=\mathrm{cone}(f)[-1]$, and write $Z=Z^+\oplus Z^-[-1]$. Using the axiom (t1) we obtain an exact triangle
\[ \begin{tikzcd} Y[-1] \arrow{r}{\binom{g}{0}} & Z[-1]\oplus Z^+ \arrow{r}{(0,h)} & X \arrow{r}{f} & Y \end{tikzcd} \]
It then follows from \hyperref[Lem:split]{Lemma \ref*{Lem:split}} that the cone of $g$ is a summand of both $X$ and $Y$, so lies in $\cT^{>0}\cap\cT^{<0}=0$. Thus $g$ is an isomorphism, so $f=0$. 
\end{proof}

\section{Admissible hereditary subcategories}

Let $\cH\subset\cT$ be an admissible subcategory. We shall say that $\cH$ is an admissible hereditary subcategory provided
\[ \Hom_\cT(\cH,\cH[n])=0 \quad\textrm{for all }n\neq0,1. \]
In this case the functor $\Ext_\cH^1(X,-)$ is right exact, and hence $\cH$ is indeed an hereditary exact category.

For example, under the canonical inclusion, an hereditary abelian category $\cH$ is an admissible hereditary abelian subcategory of its bounded derived category $\cD^b(\cH)$.

Again note that being an admissible hereditary (abelian) subcategory is stronger than just the admissible (abelian) subcategory being hereditary. For example, the heart of the usual t-structure on the stable homotopy category $\SHC$ is equivalent to the category $\Ab$ of all abelian groups, so is an admissible abelian subcategory which is also hereditary. In this case however we do not have the vanishing requirement. In fact there exist non-zero homomorphisms $X\to Y[n]$ for arbitrary large $n$ given by Steenrod operations.

\begin{Lem}
Let $\cT$ be a triangulated category with a t-structure $(\cT^{\geq0},\cT^{\leq0})$. If the t-structure is split, then the heart $\cH:=\cT^{\geq0}\cap\cT^{\leq0}$ is an admissible hereditary abelian category.
\end{Lem}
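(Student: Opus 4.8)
The plan is to show two things: first that the heart $\cH$ of a split t-structure is abelian (which is already covered by the earlier theorem of \cite{BBD}, since the heart of any t-structure is admissible abelian), and second that the vanishing $\Hom_\cT(\cH,\cH[n])=0$ holds for all $n\neq 0,1$. Since $\cH\subset\cT^{\geq0}\cap\cT^{\leq0}$, axiom (t1) immediately gives $\Hom_\cT(\cH,\cH[n])=0$ for all $n<0$ (as $\cH[n]\subset\cT^{>0}$ when $n<0$, while $\cH\subset\cT^{\leq0}$), so the only content is to rule out morphisms $X\to Y[n]$ with $X,Y\in\cH$ and $n\geq2$.

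First I would observe that a morphism $X\to Y[n]$ with $n\geq 2$ is a morphism from $X\in\cT^{>0}[-1]\subset\cT^{\geq0}$... more precisely, I would rescale: such a morphism lives in $\Hom_\cT(X, (Y[n-1])[1])$, and after a shift it is the connecting data of a potential extension. The clean way is to use the previous proposition: a t-structure is split if and only if $\Hom(\cT^{>0},\cT^{<0})=0$. Now take $X, Y\in\cH$ and $n\geq 2$, and consider $\phi\colon X\to Y[n]$. Here $X\in\cH\subset\cT^{\leq 0}\subset\cT^{\leq 1}=\cT^{<2}$... let me instead shift so the source is in $\cT^{>0}$: equivalently $\phi[-1]\colon X[-1]\to Y[n-1]$. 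We have $X[-1]\in\cT^{\geq0}[-(-1)]$, hmm. Since $\cH\subset\cT^{\geq 0}$ we get $X[-1]\in\cT^{\geq 0}[-1]=\cT^{\geq 1}=\cT^{>0}$, and since $\cH\subset\cT^{\leq 0}$ and $n-1\geq 1$ we get $Y[n-1]\in\cT^{\leq 0}[-(n-1)]=\cT^{\leq -(n-1)}\subset\cT^{\leq -1}=\cT^{<0}$. Hence $\phi[-1]\in\Hom_\cT(\cT^{>0},\cT^{<0})=0$ by the proposition, so $\phi=0$.

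This already establishes the required vanishing for all $n\geq 2$, and combined with the (t1) vanishing for $n<0$ we get $\Hom_\cT(\cH,\cH[n])=0$ for all $n\neq 0,1$. Together with the fact that $\cH$ is an admissible abelian subcategory (the heart of a t-structure, by the theorem of \cite{BBD}), this says precisely that $\cH$ is an admissible hereditary abelian category, which is the claim. I do not anticipate any real obstacle here: the only thing to be careful about is the bookkeeping of shifts and the conventions $\cT^{\leq n}=\cT^{\leq 0}[-n]$, $\cT^{\geq n}=\cT^{\geq 0}[-n]$, so that one applies the preceding proposition to the correctly shifted morphism. The heart being hereditary as an \emph{exact} category then follows automatically from the remark at the start of the section, since $\Ext^1_\cH$ is identified with $\Hom_\cT(-,-[1])$ and the higher $\Ext$ groups vanish.
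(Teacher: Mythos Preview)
Your argument is correct and is essentially identical to the paper's: both invoke the preceding proposition that split implies $\Hom(\cT^{>0},\cT^{<0})=0$, and then observe $\cH[-1]\subset\cT^{>0}$ and $\cH[n-1]\subset\cT^{<0}$ for $n\geq2$ to kill $\Hom(\cH,\cH[n])$. One small bookkeeping slip: you wrote $Y[n-1]\in\cT^{\leq0}[-(n-1)]$, but shifting $Y\in\cT^{\leq0}$ by $[n-1]$ lands in $\cT^{\leq0}[n-1]=\cT^{\leq-(n-1)}$, so the intermediate expression has a sign error even though your conclusion is right.
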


\begin{proof}
Since the t-structure is split, we have $\Hom(\cT^{>0},\cT^{<0})=0$ by the previous proposition. Since $\cH[-1]\subset\cT^{>0}$ and $\cH[n-1]\subset\cT^{<0}$ for $n\geq2$, we deduce that $\cH$ is an admissible hereditary abelian subcategory.
\end{proof}

We can now state the main result of this paper.

\begin{Thm}\label{Thm:Main}
Let $\cT$ be a triangulated category and $\cH\subset\cT$ an admissible hereditary abelian subcategory. Then we can lift the embedding $\cH\to\cT$ to a triangle equivalence $F\colon\cD^b(\cH)\xrightarrow\sim\thick$ from the bounded derived category of $\cH$ to the thick subcategory of $\cT$ generated by $\cH$.
\end{Thm}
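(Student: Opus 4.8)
The plan is to construct the functor $F\colon \cD^b(\cH)\to\cT$ directly on a suitable model for $\cD^b(\cH)$ and then verify it is exact and fully faithful with essential image $\thick$. Since $\cH$ is hereditary, every object of $\cD^b(\cH)$ is isomorphic to a (finite) direct sum $\bigoplus_n H_n[n]$ of shifts of objects of $\cH$; this is the key simplification that the hereditary hypothesis buys us, and it suggests defining $F$ on objects by $\bigoplus_n H_n[n]\mapsto\bigoplus_n H_n[n]$, using the given embedding $\cH\hookrightarrow\cT$ and the shift in $\cT$. The real content is to define $F$ on morphisms. Because there are no maps $\cH[r]\to\cH[s]$ for $s\neq r,r+1$ in $\cD^b(\cH)$, a morphism $\bigoplus H_n[n]\to\bigoplus H'_n[n]$ is a matrix whose entries are: maps $H_n\to H'_n$ in $\cH$ (the ``diagonal'' part, living in $\Hom_\cH$), and extension classes in $\Ext^1_\cH(H_n,H'_{n-1})$ (the ``off-diagonal'' part). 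By Dyer's theorem (the first theorem in Section 2, with $\cH$ admissible) we have a natural identification $\Ext^1_\cH(Z,X)\cong\Hom_\cT(Z,X[1])$, and $\Hom_\cH(Z,X)=\Hom_\cT(Z,X)$ since $\cH$ is full in $\cT$. So the matrix entries of a morphism in $\cD^b(\cH)$ match up exactly with the matrix entries (using the same grading of $\cT$ by degrees of shifts) of a morphism between the images in $\cT$, and I would define $F$ on morphisms to be this identification entry-by-entry.

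The next step is to check that $F$ so defined is a functor, i.e.\ compatible with composition. On the diagonal part this is the fact that $\cH\hookrightarrow\cT$ is a functor. The genuinely new identities to verify involve the interaction of diagonal maps with extension classes and of two extension classes with each other: the former amounts to the compatibility of the isomorphism $\Ext^1_\cH(Z,X)\cong\Hom_\cT(Z,X[1])$ with pull-backs along $Z'\to Z$ and push-outs along $X\to X'$, which is recorded in the remark immediately after Dyer's theorem in the excerpt; the latter concerns the composite of two extension classes, which in $\cD^b(\cH)$ is zero (an element of $\Ext^2_\cH=0$ by heredity) and correspondingly $H_n[n]\to H_{n-1}[n-1]\to H_{n-2}[n-2]$ in $\cT$ is a map $\cH[n]\to\cH[n-2]$, hence also zero by the admissible hereditary hypothesis $\Hom_\cT(\cH,\cH[-2])=0$. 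So functoriality reduces to statements already available.

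Once $F$ is a functor, full faithfulness is immediate from the construction: on each pair of objects $\Hom$-groups decompose as direct sums of $\Hom_\cH$'s and $\Ext^1_\cH$'s on the source side and, by the long-exact-sequence / vanishing argument just described, as the same direct sums of $\Hom_\cT$'s and $\Hom_\cT(-,-[1])$'s on the target side, and $F$ is the identity under these identifications. The essential image is a subcategory of $\cT$ containing $\cH$, closed under shifts and finite direct sums; since it is the image of a fully faithful exact functor from a triangulated category it is a triangulated, hence thick, subcategory, and as the smallest such containing $\cH$ it equals $\thick$. The main obstacle, and the part that requires the most care, is exactness of $F$: I must produce, for every exact triangle in $\cD^b(\cH)$, a distinguished triangle in $\cT$ with a compatible isomorphism. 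It suffices to check this on a generating class of triangles. Using that every complex splits into its cohomology, a general triangle can be built from (i) the ``rotation'' triangles $H[n]\xrightarrow{\mathrm{id}}H[n]\to 0$ and their shifts, (ii) the triangles $X[n]\to 0\to X[n+1]\xrightarrow{\mathrm{id}}X[n+1]$, and (iii), crucially, the triangles realising a short exact sequence $0\to X\to Y\to Z\to 0$ in $\cH$ as $X\to Y\to Z\to X[1]$ with connecting map the corresponding class in $\Ext^1_\cH(Z,X)=\Hom_\cT(Z,X[1])$ — but this last is \emph{exactly} a distinguished triangle in $\cT$ by Dyer's theorem. The technical work is then to show that closure of ``$F$-exact triangles'' under the operations of the triangulated structure (rotation, direct sums, and the octahedron/$\AxiomA$) lets one bootstrap from these cases to all triangles; the direct-sum step is where the lemma on direct sums of exact triangles promised in the first appendix gets used, and handling the octahedral completions carefully is the crux of the argument.
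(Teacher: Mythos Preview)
Your construction of $F$ and the verification of functoriality and full faithfulness are correct and essentially identical to the paper's. The genuine gap is in the exactness argument. Saying that the class of ``$F$-exact'' triangles is closed under rotation, direct sum, and octahedral completion, and that your list (i)--(iii) generates everything, is not a proof plan: there is no general principle asserting that ``$F$ sends this triangle to an exact triangle'' is inherited through octahedral diagrams, because the completions in (TR4) are not unique, so one cannot simply match an octahedron in $\cD$ with one in $\cT$. Moreover, your list (i)--(iii) does not obviously cover even the triangle on a morphism $f\colon X\to Y$ in $\cH$ that is neither monic nor epic.

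The paper supplies exactly the missing technical input. Its \emph{Key Lemma} says: given an octahedral diagram of type (\AxiomA) with connecting map $h$, if $\Hom(Y'',g')=0$ then the ``assembled'' triangle
\[
X \xrightarrow{\binom{-f'}{f''}} Y'\oplus Y'' \xrightarrow{(g',g'')} Z \xrightarrow{h} X[1]
\]
is exact. This is what drives an \emph{induction on amplitude}: decompose $Y=Y'\oplus Y''$ with $Y'\in\cH[n]$ of lowest degree; apply (\AxiomA) in $\cD$; by induction the rows and columns of the resulting diagram have smaller amplitude and are carried to exact triangles in $\cT$; then the vanishing $\Hom_\cT(Y'',Y')=0$ lets the Key Lemma reassemble the original triangle in $\cT$. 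This reduces to the case $X\in\cH$, which is handled by a second, separate lemma describing \emph{all} triangles $X\to Y\to Z\to X[1]$ with $X,Y\in\cH$ in terms of an explicit pull-back/push-out square in $\cH$ (so that the same description, read in $\cT$, yields exactness there). The remaining sub-cases ($Y\in\cH[1]$, $Y\in\cH\oplus\cH[1]$, general $Y$) are then pieced together using the Key Lemma and the direct-sum lemma from the appendix. You correctly flagged that ``handling the octahedral completions carefully is the crux'', but the Key Lemma and the amplitude induction are the concrete ideas needed to do so, and neither appears in your proposal.
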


In the case of the stable homotopy category, one can also lift the embedding $\Ab\to\SHC$ to a triangle functor $\cD^b(\Ab)\to\SHC$, the Eilenberg-Mac Lane Spectrum, but this is neither fully faithful nor essentially surjective.

The theorem yields various characterisations of those triangulated categories which are equivalent to the bounded derived category of an hereditary abelian category. Compare \cite[Theorem 2.3]{Chen-Ringel}, but observe that in their theorem the equivalence $\cD^b(\cH)\xrightarrow\sim\cT$ is only shown to be additive and commuting with the translation functors, whereas we can prove that it is always a triangle equivalence.

\begin{Cor}\label{Cor:Main}
Let $\cT$ be a triangulated category, and $\cH$ a full additive subcategory such that $\Hom(\cH,\cH[-n])=0$ for all $n>0$. Then the following are equivalent.
\begin{enumerate}
\item $\cH$ is hereditary abelian and the inclusion $\cH\to\cT$ can be lifted to an equivalence $\cD^b(\cH)\cong\cT$.
\item $\add\big(\bigcup_n\cH[n]\big)=\cT$.
\item $\cH$ is the heart of a split, bounded t-structure.
\end{enumerate}
\end{Cor}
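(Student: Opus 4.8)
We prove the cycle of implications $(3)\Rightarrow(1)\Rightarrow(2)\Rightarrow(3)$; only the first of these uses Theorem~\ref{Thm:Main}. For $(3)\Rightarrow(1)$: if $\cH$ is the heart of a split, bounded t-structure, then by the Lemma identifying the heart of a split t-structure as an admissible hereditary abelian category, $\cH$ is an admissible hereditary abelian subcategory of $\cT$, so Theorem~\ref{Thm:Main} provides a triangle equivalence $F\colon\cD^b(\cH)\xrightarrow\sim\thick$ lifting the inclusion. That $\thick=\cT$ follows from boundedness: $\cT=\bigcup_n\cT^{[-n,n]}$, and every object of $\cT^{[-n,n]}$ is obtained from the finitely many $\cH[i]$ with $|i|\leq n$ by iterated extensions (using the truncation triangles), hence lies in the thick subcategory generated by $\cH$. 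Thus $F$ is an equivalence $\cD^b(\cH)\xrightarrow\sim\cT$, which is $(1)$.

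For $(1)\Rightarrow(2)$: let $F\colon\cD^b(\cH)\xrightarrow\sim\cT$ be the triangle equivalence lifting the inclusion. Since $\cH$ is hereditary, every bounded complex over $\cH$ is isomorphic in $\cD^b(\cH)$ to the direct sum of its shifted cohomology objects, so $\cD^b(\cH)=\add\big(\bigcup_n\cH[n]\big)$. As $F$ commutes with the shift and restricts to the identity on $\cH$, it carries $\cH[n]$ to $\cH[n]$, and applying the equivalence $F$ gives $\cT=\add\big(\bigcup_n\cH[n]\big)$.

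For $(2)\Rightarrow(3)$: set $\cT^{\leq0}:=\add\big(\bigcup_{n\geq0}\cH[n]\big)$ and $\cT^{\geq0}:=\add\big(\bigcup_{n\leq0}\cH[n]\big)$; then $\cT^{\geq1}=\add\big(\bigcup_{n\leq-1}\cH[n]\big)$ and $\cT^{\geq0}\cap\cT^{\leq0}=\add\cH=\cH$. Axiom (t2) is clear, and (t1), i.e.\ $\Hom(\cT^{\leq0},\cT^{>0})=0$, reduces on the generating objects to $\Hom(\cH,\cH[-m])=0$ for $m>0$, which is our hypothesis. For (t3), given $X\in\cT$ write it as a direct summand of $Y=\bigoplus_iH_i[n_i]$ with $H_i\in\cH$, and group the summands by the sign of $n_i$: thus $Y=P\oplus N$ with $P\in\cT^{\leq0}$ (the terms with $n_i\geq0$) and $N\in\cT^{\geq1}$ (those with $n_i<0$). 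By (t1) we have $\Hom(P,N)=0$, so the idempotent $e\in\End(Y)$ with image $X$ is ``upper triangular'' with respect to $P\oplus N$. The key point is that $X$ then decomposes as $P'\oplus N'$ with $P'$ a summand of $P$ and $N'$ a summand of $N$; granting this, the split triangle $P'\to X\to N'\to P'[1]$ verifies (t3) and simultaneously shows $\delta_X=0$, so the t-structure is split. It is bounded by $(2)$, and its heart is $\cH$. (Alternatively, once this t-structure is available, Theorem~\ref{Thm:Main} applies and recovers $(1)$ directly.)

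The only step requiring genuine care is the decomposition of a direct summand $X$ of $Y=P\oplus N$ with $\Hom(P,N)=0$. Writing $e=\left(\begin{smallmatrix}p&u\\0&q\end{smallmatrix}\right)$, the relation $e^2=e$ gives $p^2=p$, $q^2=q$ and $pu+uq=u$, whence $puq=p(u-pu)=pu-p^2u=0$; conjugating $e$ by $\left(\begin{smallmatrix}1&-pu\\0&1\end{smallmatrix}\right)$ therefore replaces it by $\left(\begin{smallmatrix}p&uq\\0&q\end{smallmatrix}\right)$, which is the sum of the orthogonal idempotents $\left(\begin{smallmatrix}p&0\\0&0\end{smallmatrix}\right)$ and $\left(\begin{smallmatrix}0&uq\\0&q\end{smallmatrix}\right)$; their images are a summand of $P$ and a summand of $N$ respectively, so $X$ decomposes as claimed. (One needs the relevant images to exist as objects in $\cT$, a point of bookkeeping about idempotents.) The remaining verifications — the closure axioms for the two subcategories, and the formality of bounded complexes over a hereditary category — are routine.
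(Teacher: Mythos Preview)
Your proof follows the same cycle of implications as the paper (the paper writes it as $(1)\Rightarrow(2)\Rightarrow(3)\Rightarrow(1)$, you as $(3)\Rightarrow(1)\Rightarrow(2)\Rightarrow(3)$, but the content is the same), and for $(2)\Rightarrow(3)$ you construct exactly the t-structure the paper does. The paper simply asserts this is a split bounded t-structure with heart $\cH$, whereas you attempt a detailed verification of (t3).

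That verification is where you overcomplicate matters and then leave a real gap. Your idempotent manipulation is correct as far as it goes (modulo a harmless sign in the conjugating matrix), but the parenthetical ``a point of bookkeeping about idempotents'' is not bookkeeping: for your argument you genuinely need the idempotents $p$ on $P$ and $q$ on $N$ to split, and nothing in the hypotheses guarantees $\cT$ is idempotent complete. The paper's intended reading avoids this entirely: under $(2)$ every object of $\cT$ already \emph{is} a finite direct sum $\bigoplus_iH_i[n_i]$, so (t3) and the splitness follow immediately by grouping the summands according to the sign of $n_i$. With that reading your idempotent detour is unnecessary. (If one insists on the summand-closure interpretation of $\add$, the issue is real and neither you nor the paper addresses it; but then one should also note the heart is $\add(\cH)$, not $\cH$.)

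For $(3)\Rightarrow(1)$, your ``iterated extensions via truncation triangles'' is fine, but the paper's argument is slightly sharper: because the t-structure is \emph{split}, the truncation triangles are split, so one obtains directly that every object is a finite direct sum $\bigoplus_nH^n(X)[-n]$ with $H^n(X)\in\cH$, which already lies in $\thick$. This is worth saying explicitly, since it is exactly the splitness that makes the induction terminate cleanly.
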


\begin{proof}
It is clear that (1) implies (2). For (2) implies (3) note that we have a split, bounded t-structure
\[ \cT^{\leq0} := \add\big(\bigcup_{n\geq0}\cH[n]\big) \quad\textrm{and}\quad \cT^{\geq0} := \add\big(\bigcup_{n\leq0}\cH[n]\big) \]
whose heart is $\cH$. Finally, for (3) implies (1) we just need to show $\thick=\cT$.

Take $X\in\cT^{\leq0}$. By (t3) we have $X[-1]\cong X^+\oplus X^-[-1]$ with $X^+\in\cT^{>0}$ and $X^-\in\cT^{<0}$. Thus $X^+\in\cH[-1]$, say $X^+=H^0(X)[-1]$. Repeating, for all $r\geq0$ we can write $X\cong X'\oplus\bigoplus_{0\leq n\leq r}H^n(X)[-n]$ with $H^n(X)\in\cH$ and $X'\in\cT^{\leq-r}$. Since the t-structure is bounded, this process must stop. Similarly if $X\in\cT^{\geq0}$. We deduce that every object $X\in\cT$ is of the form $X\cong\bigoplus_nH^n[-n]$ with $H^n\in\cH$ almost all zero, and hence that $\cT=\thick$.
\end{proof}

\section{Algebraic triangulated categories}\label{Sec:alg}

Before giving the proof of our main theorem, we consider the special case of an algebraic triangulated category.

Recall that a Frobenius category $\cE$ is an exact category with enough projectives and enough injectives, and such that the projectives and injectives coincide \cite{Happel}. In this case we can form the stable category $\underline\cE$ by taking the quotient by the ideal of all morphisms factoring through an injective. This is then a triangulated category, and in general a triangulated category is called algebraic if it is equiavalent to the stable category of a Frobenius category. In this setting we have the following result of Keller and Vossieck \cite{KV}.

\begin{Thm}[\cite{KV}]
Let $\cE$ be a Frobenius category, $\cC$  an additive category, and $F\colon\cC\to\underline\cE$ an additive functor. Assume that $\Hom_{\underline\cE}(F\cC,F\cC[-n])=0$ for all $n>0$.
\begin{enumerate}
\item[(a)] The functor $F$ extends to a triangle functor $\tilde F\colon\mathbscr K^b(\cC)\longrightarrow\underline\cE$ from the bounded homotopy category. Moreover, $\tilde F$ is fully faithful if and only if $F$ is fully faithful and $\Hom_{\underline\cE}(F\cC,F\cC[n])=0$ for all $n>0$.
\item[(b)] Suppose that $\cC$ is exact and that $F$ sends exact sequences to exact triangles, so given an exact sequence in $\cC$,
\[ \begin{tikzcd} 0 \arrow{r} & X \arrow{r}{f} & Y \arrow{r}{g} & Z \arrow{r} & 0 \end{tikzcd} \]
there is an exact triangle in $\underline\cE$ of the form
\[ \begin{tikzcd} FX \arrow{r}{Ff} & FY \arrow{r}{Fg} & FZ \arrow{r} & FX[1] \end{tikzcd}. \]
Then $\tilde F$ factors as $\mathbscr K^b(\cC)\overset{\mathrm{can}}{\longrightarrow}\cD^b(\cC)\overset{\hat F}{\longrightarrow}\underline\cE$. Moreover, $\hat F$ is fully faithful if and only if $F$ is fully faithful and for all $\theta\colon FX'[-n]\to FX$ with $n>0$, we have $F(f)\theta=0$ for some exact sequence in $\cC$
\[ \begin{tikzcd} 0 \arrow{r} & X \arrow{r}{f} & Y \arrow{r}{g} & Z \arrow{r} & 0 \end{tikzcd}. \]
\end{enumerate}
\end{Thm}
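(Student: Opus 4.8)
The plan is to construct $\tilde F$ by the standard iterated mapping cone procedure, using the hypothesis $\Hom_{\underline\cE}(F\cC,F\cC[-n])=0$ for $n>0$ to ensure that the choices this procedure involves at every step are in fact forced. First I would fix the bookkeeping. Every object of $\mathbscr K^b(\cC)$ is isomorphic to a genuine bounded complex $X^\bullet$, and the degreewise split short exact sequences of complexes $0\to\sigma_{\leq i}X^\bullet\to X^\bullet\to\sigma_{>i}X^\bullet\to0$ exhibit $X^\bullet$ as an iterated cone of shifts of objects of $\cC$ placed in a single degree. One then defines $\tilde F$ on objects and on morphisms simultaneously, by induction on the length of the complexes involved: for length $0$ put $\tilde F(X^a[-a]):=F(X^a)[-a]$; in general write $X^\bullet$ as the cone of the chain map $(\sigma_{\leq b-1}X^\bullet)[-1]\to X^b[-b]$ induced by the top differential, and let $\tilde F(X^\bullet)$ be a cone in $\underline\cE$ of the morphism $\tilde F\bigl((\sigma_{\leq b-1}X^\bullet)[-1]\bigr)\to\tilde F\bigl(X^b[-b]\bigr)$ supplied by the morphism half of the induction.

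The real content is functoriality together with independence of all the choices. Given a chain map $\phi\colon X^\bullet\to Y^\bullet$, one extends it to a morphism of the defining triangles using (TR3); two such extensions differ by an element of a group that, by d\'evissage along the stupid truncations, is built from the negative groups $\Hom_{\underline\cE}(FC,FC'[-n])$, $n>0$, and so vanishes. Hence $\tilde F(\phi)$ is uniquely determined, composition is respected, and the whole construction is independent of the chosen cones and extensions up to canonical isomorphism; in particular $\tilde F$ is additive and commutes with the shift. To see that $\tilde F$ is a triangle functor it then suffices to check that $\tilde F(\mathrm{cone}\,\phi)$ is a cone of $\tilde F(\phi)$ for an honest chain map $\phi$, which one proves by the same induction on lengths, invoking the octahedral axiom to compare $\mathrm{cone}\,\phi$ with the cones assembled from truncations, the vanishing hypothesis once more forcing the comparison morphisms to be the obvious ones.

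For the fully faithful criterion in (a), one compares two finite filtrations. On one side, $\Hom_{\mathbscr K^b(\cC)}(X^\bullet,Y^\bullet[m])$ is filtered, using the stupid truncations of $X^\bullet$ and $Y^\bullet$, with subquotients among the groups $\Hom_\cC(C,C')$, since $\Hom_{\mathbscr K^b(\cC)}(C[-i],C'[-j])$ equals $\Hom_\cC(C,C')$ for $i=j$ and $0$ otherwise. On the other side $\Hom_{\underline\cE}(\tilde FX^\bullet,\tilde FY^\bullet[m])$ carries the analogous filtration with subquotients $\Hom_{\underline\cE}(FC,FC'[n])$. If $F$ is fully faithful and $\Hom_{\underline\cE}(F\cC,F\cC[n])=0$ for all $n>0$ (equivalently, given the standing hypothesis, for all $n\neq0$), then $\tilde F$ identifies the two filtrations term by term and so is fully faithful; conversely, restricting a fully faithful $\tilde F$ along $\cC\hookrightarrow\mathbscr K^b(\cC)$ and to the shifts $C[-n]$ recovers both full faithfulness of $F$ and the positive vanishing.

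For (b), the canonical functor $\mathbscr K^b(\cC)\to\cD^b(\cC)$ is the Verdier quotient by the bounded acyclic complexes, so $\tilde F$ factors through it exactly when it annihilates every such complex. A bounded acyclic complex is an iterated cone of shifts of the three-term complexes $[X\to Y\to Z]$ attached to exact sequences $0\to X\to Y\to Z\to0$ in $\cC$; since $F$ sends such a sequence to an exact triangle $FX\to FY\to FZ\to FX[1]$, one computes $\tilde F[X\to Y\to Z]=0$, and then $\tilde F$ kills all bounded acyclics by induction on length, using that it is a triangle functor. This yields $\hat F$. Its full faithfulness is the same d\'evissage as in (a), now with $\Hom_{\cD^b(\cC)}(C[-n],C')=\Ext^n_\cC(C,C')$: the stated condition, that every $\theta\colon FX'[-n]\to FX$ with $n>0$ satisfies $F(f)\theta=0$ for some exact sequence $0\to X\to Y\to Z\to0$, is exactly the input that makes the filtration argument run and shows the comparison maps $\Ext^n_\cC(X',X)\to\Hom_{\underline\cE}(FX'[-n],FX)$ to be bijective for all $n$, the substance being the case $n>0$. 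The main obstacle is not any single step but the coherent organisation of this double induction: showing that nothing in the construction depends on the order in which one peels off truncations, which is precisely where the octahedral axiom and the vanishing of the negative $\Hom$-groups do their work.
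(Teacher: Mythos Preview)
The paper does not contain a proof of this theorem: it is quoted from Keller--Vossieck \cite{KV} and used as a black box to derive the subsequent corollary about algebraic triangulated categories. There is therefore no proof in the paper to compare your proposal against.

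That said, your sketch follows the standard strategy one finds in the literature for results of this type (and is in the spirit of what Keller and Vossieck do): build $\tilde F$ by iterated cones along stupid truncations, use the vanishing of negative $\Hom$'s to force uniqueness of all fill-ins, and invoke the octahedral axiom to propagate exactness. One point you gloss over in part (b) deserves more care. Your claim that the effaceability condition ``is exactly the input that makes the filtration argument run and shows the comparison maps $\Ext^n_\cC(X',X)\to\Hom_{\underline\cE}(FX'[-n],FX)$ to be bijective'' hides the real mechanism: the functors $T^n(-):=\Hom_{\underline\cE}(FX',F(-)[n])$ form a cohomological $\delta$-functor on $\cC$ (because $F$ sends exact sequences to exact triangles), and the stated condition says precisely that this $\delta$-functor is effaceable for $n>0$, hence universal, hence canonically isomorphic to $\Ext^n_\cC(X',-)$. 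Only once you have this identification on objects of $\cC$ does the d\'evissage along truncations yield full faithfulness on all of $\cD^b(\cC)$. Without making the universal $\delta$-functor step explicit, the passage from ``every $\theta$ is killed by some $F(f)$'' to ``the comparison map is bijective'' is not justified.
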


We remark that if condition (b) holds and the functor $\hat F$ is fully faithful, then (the essential image of) $\cC$ is an admissible subcategory of $\underline\cE$.

Using this theorem we get an easy proof of \hyperref[Thm:Main]{Theorem \ref*{Thm:Main}}, and hence also an easy proof of \hyperref[Cor:Main]{Corollary \ref*{Cor:Main}}, whenever the triangulated category is algebraic. Compare \cite[Thereom 3.3]{Chen-Ringel}.

\begin{Cor}
Let $\underline\cE$ be an algebraic triangulated category and $\cH$ an admissible hereditary exact subcategory. Then we can lift the embedding $\cH\to\underline\cE$ to a triangle equivalence $\cD^b(\cH)\xrightarrow\sim\thick$.
\end{Cor}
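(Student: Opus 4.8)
The plan is to deduce everything from the theorem of Keller and Vossieck. Fix an equivalence of $\underline{\cE}$ with the stable category of a Frobenius category. By \cite{Dyer} the exact structure on the admissible subcategory $\cH$ is precisely the one for which the inclusion $F\colon\cH\to\underline{\cE}$ carries a short exact sequence $0\to X\to Y\to Z\to0$ to an exact triangle $FX\to FY\to FZ\to FX[1]$, and admissibility gives $\Hom_{\underline{\cE}}(F\cH,F\cH[-n])=0$ for all $n>0$. So the hypotheses of part (b) of the Keller--Vossieck theorem are satisfied, and the inclusion extends to a triangle functor $\tilde F\colon\mathbscr K^b(\cH)\to\underline{\cE}$ which factors through a triangle functor $\hat F\colon\cD^b(\cH)\to\underline{\cE}$ restricting to $F$ on $\cH$.

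Next I would verify that $\hat F$ is fully faithful by checking the criterion given in part (b). Since $\cH$ is a full subcategory, $F$ is fully faithful, so it remains to show that every morphism $\theta\colon FX'[-n]\to FX$ in $\underline{\cE}$ with $n>0$ is annihilated by post-composition with some admissible monomorphism out of $X$. For $n\geq2$ we have $\Hom_{\underline{\cE}}(FX'[-n],FX)\cong\Hom_{\underline{\cE}}(FX',FX[n])=0$ because $\cH$ is hereditary, so $\theta=0$ and there is nothing to prove. For $n=1$, the isomorphism $\Hom_{\underline{\cE}}(FX'[-1],FX)\cong\Ext^1_\cH(X',X)$ of \cite{Dyer} sends $\theta$ to the class of a short exact sequence $0\to X\xrightarrow{f}Y\xrightarrow{g}X'\to0$, which by construction is the one fitting into an exact triangle $FX\xrightarrow{Ff}FY\xrightarrow{Fg}FX'\xrightarrow{\theta[1]}FX[1]$. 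Rotating this triangle one step to the left gives the triangle $FX'[-1]\xrightarrow{-\theta}FX\xrightarrow{Ff}FY\xrightarrow{Fg}FX'$, and consecutive composites in a triangle vanish, so $F(f)\theta=0$. Hence the criterion holds and $\hat F$ is fully faithful; as noted in the excerpt this already shows $\cH$ is an admissible subcategory of $\underline{\cE}$.

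Finally I would identify the essential image $\cS$ of $\hat F$ with $\thick$. On the one hand $\cS$ is a triangulated subcategory of $\underline{\cE}$ containing $\cH$, and it is contained in $\thick$ because every object of $\cD^b(\cH)$ is obtained from stalk complexes (objects of $\cH$) by finitely many cones and shifts. On the other hand $\hat F$ induces an equivalence $\cD^b(\cH)\xrightarrow{\sim}\cS$, and since $\cD^b(\cH)$ is idempotent complete, $\cS$ is closed under direct summands in $\underline{\cE}$; being a thick subcategory that contains $\cH$ it must then contain $\thick$. Therefore $\cS=\thick$ and $\hat F$ restricts to the desired triangle equivalence $\cD^b(\cH)\xrightarrow{\sim}\thick$.

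I do not expect a genuine obstacle here: all of the real content is packaged inside the Keller--Vossieck theorem, and hereditariness intervenes only to make the full-faithfulness criterion vacuous in degrees $\geq2$ and a one-line computation in degree $1$. The only point requiring any care is the bookkeeping in that degree-$1$ case (tracking which exact sequence corresponds to $\theta$ and that rotating its triangle kills $F(f)\theta$). This is precisely why the argument is only a warm-up for the general case: in an arbitrary triangulated category there is no such black box, and in \hyperref[Thm:Main]{Theorem \ref*{Thm:Main}} one must instead construct the lift $\cD^b(\cH)\to\cT$ directly and prove by hand that it is exact, which is where the real difficulty lies.
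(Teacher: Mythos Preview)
Your argument is correct and follows the same route as the paper: apply part (b) of the Keller--Vossieck theorem to obtain $\hat F$, verify full faithfulness by observing that $\theta$ vanishes for $n\geq 2$ while for $n=1$ the exact sequence corresponding to $\theta$ yields $F(f)\theta=0$, and conclude. The only difference is that where the paper simply declares the essential image to be ``clearly equal to $\thick$'', you supply an actual argument via idempotent completeness of $\cD^b(\cH)$; this is a genuine point the paper elides, and your appeal to Balmer--Schlichting is the right way to close it.
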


\begin{proof}
Since $\cH$ is admissible, the embedding $\cH\to\underline\cE$ satisfies condition (b) of the above theorem. We can therefore lift the embedding to a triangle functor $\hat F\colon\cD^b(\cH)\to\underline\cE$. The essential image is clearly equal to $\thick$, so we just need to prove that $\hat F$ is fully faithful. 

Consider a morphism $\theta\colon X'[-n]\to X$ for some $n>0$. If $n>1$, then $\theta=0$ since $\cH$ is admissible hereditary. If instead $n=1$, then $\theta[1]$ corresponds to an exact sequence in $\cH$
\[ \begin{tikzcd} X \arrow{r}{f} & Y \arrow{r}{g} & X' \arrow{r} & 0 \end{tikzcd} \]
in which case clearly $f\theta=0$. Since the embedding $\cH\to\underline\cE$ is fully faithful, we deduce from part (b) again that $\hat F$ is fully faithful.
\end{proof}

\section{Proof of Theorem \ref*{Thm:Main}}

We first give the definition of the functor $F$ in the theorem. Set $\cD:=\cD^b(\cH)$, and recall that we have the inclusion functors $\iota_\cT\colon\cH\to\cT$ and $\iota_\cD\colon\cH\to\cD$, as well as the cohomology functors $H^n\colon\cD\to\cH$. In particular, we have a natural isomorphism $H^0\iota_\cD\cong\mathrm{id}_\cH$. We define the functor $F$ on objects by sending $X\in\cD$ to $\bigoplus_n(\iota_\cT H^n(X))[-n]$.

For morphisms, we first observe that for all $X,Y\in\cH$ and all $n\in\mathbb Z$ we have
\[ \Hom_\cD(X,Y[n]) \cong \Ext^n_\cH(X,Y) \cong \Hom_\cT(X,Y[n]). \]
For, this is clear if $n=0,1$, whereas if $n\neq0,1$, then all three vanish.

Next, since $\cH$ is hereditary, for each $X\in\cD$ we can fix an isomorphism
\[ \eta_X \colon X \xrightarrow\sim \bigoplus_n(\iota_\cD H^n(X))[-n]. \]
This is not functorial, but we can choose $\eta$ in such a way that $\eta_{X[1]}=\eta_X[1]$. Now, given a morphism $f\colon X\to Y$ in $\cD$, we can write $\eta_Yf\eta_X^{-1}=(f_{nm})$, where $f_{nm}\colon (\iota_\cD H^m(X))[-m]\to(\iota_\cD H^n(Y))[-n]$. We therefore define $F(f):=(f_{nm})$, but where $f_{nm}$ is now viewed as a morphism $(\iota_\cT H^m(X))[-m]\to(\iota_\cT H^n(Y))[-n]$ in $\cT$.

It is immediate that $F$ is an additive functor, and is fully faithful. Moreover, we have natural isomorphisms $H^n(X[1])\xrightarrow{\sim} H^{n+1}(X)$, leading to natural isomorphisms $(\iota_\cT H^n(X[1]))[-n]\xrightarrow{\sim}(\iota_\cT H^{n+1}(X))[-n-1]$, and hence to an isomorphism $\theta_X\colon F(X[1])\cong(F(X))[1]$. Also, by our choice of $\eta$, given $f\colon X\to Y$ we have $\eta_{Y[1]}f[1]\eta_{X[1]}^{-1}=(\eta_Yf\eta_X^{-1})[1]$, and so $\theta$ determines a natural isomorphism $F(X[1])\xrightarrow{\sim}(F(X))[1]$. Finally we have $F\iota_\cD(X)=\iota_\cT H^0(X)\cong \iota_\cT(X)$, yielding a natural isomorphism $F\iota_\cD\xrightarrow{\sim}\iota_\cT$.

We have therefore lifted the embedding $\cH\to\cT$ to an additive functor $F\colon\cD\to\cT$ which is both fully faithful and commutes with the shift. Since the essential image of $F$ is clearly equal to $\thick$, it just remains to prove that $F$ is a triangle functor.

\subsection{Proof that \textit{F} is a triangle functor}

From now on, for notational simplicity, we will suppress the inclusions $\iota$, the isomorphisms $\eta$, and the natural isomorphisms.

Given $X\in\cD$, we can write $X\cong\bigoplus_{r\leq n\leq s}X_n$ with $X_n\in\cH[n]$ and $X_r,X_s$ non-zero. The amplitude of $X$ is then defined to be $s-r$. Our proof will be done by induction on the amplitude of one of the terms in the triangle. For this, the key observation is the following, a special case of the axiom (TR4).

\begin{Lem}[Key Lemma]
Consider an exact commutative diagram in a triangulated category
\[ \begin{tikzcd}
I \arrow{r}{a} \arrow[-, double equal sign distance]{d} & X \arrow{r}{f'} \arrow{d}{f''} & Y' \arrow{r}{y'} \arrow{d}{g'} & I[1] \arrow[-, double equal sign distance]{d}\\
I \arrow{r}{f''a} & Y'' \arrow{r}{g''} \arrow{d}{y''} & Z \arrow{r}{b} \arrow{d}{z} & I\\
& E[1] \arrow[-, double equal sign distance]{r} \arrow{d}{x[1]} & E[1] \arrow{d}{(f'x)[1]}\\
& X[1] \arrow{r}{f'[1]} & Y'[1]
\end{tikzcd} \]
such that $a[1]b=h=x[1]z$. If $\Hom(Y'',g')=0$, then we also have an exact triangle
\[ \begin{tikzcd} X \arrow{r}{\binom{-f'}{f''}} & Y'\oplus Y'' \arrow{r}{(g',g'')} & Z \arrow{r}{h} & X[1] \end{tikzcd} \]
\end{Lem}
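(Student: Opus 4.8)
The statement is the octahedral axiom in disguise, applied to the composable pair $E\xrightarrow{x}X\xrightarrow{f'}Y'$ (with composite $f'x$); the point is that the given diagram already contains the three exact triangles this requires. Rotating the second column gives a triangle $E\xrightarrow{x}X\xrightarrow{f''}Y''\xrightarrow{y''}E[1]$ on $x$, so $Y''\cong\mathrm{cone}(x)$; rotating the top row gives a triangle $X\xrightarrow{f'}Y'\xrightarrow{y'}I[1]\xrightarrow{-a[1]}X[1]$ on $f'$, so $I[1]\cong\mathrm{cone}(f')$; and rotating the third column gives a triangle $E\xrightarrow{f'x}Y'\xrightarrow{g'}Z\xrightarrow{z}E[1]$ on $f'x$, so $Z\cong\mathrm{cone}(f'x)$ with $g'$ the canonical morphism. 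Applying (TR4) to this data (in one of the equivalent formulations collected in the appendix) yields a morphism $\gamma\colon Y''\to Z$ — the induced map $\mathrm{cone}(x)\to\mathrm{cone}(f'x)$ — satisfying $\gamma f''=g'f'$ and $z\gamma=y''$, such that the square with vertices $X,Y'',Y',Z$ and maps $f'',f',\gamma,g'$ is homotopy cartesian; equivalently there is an exact triangle
\[ X\xrightarrow{\binom{-f''}{f'}}Y''\oplus Y'\xrightarrow{(\gamma,\,g')}Z\xrightarrow{x[1]z}X[1]. \]

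The plan, then, is: first obtain this triangle from (TR4); second, show $\gamma=g''$. Granting the latter, one interchanges the two summands of $Y''\oplus Y'$ and reorders, and — invoking the hypothesis $a[1]b=x[1]z=h$ for the connecting morphism — rewrites the triangle as the asserted $X\xrightarrow{\binom{-f'}{f''}}Y'\oplus Y''\xrightarrow{(g',g'')}Z\xrightarrow{h}X[1]$. The identification $\gamma=g''$ is the heart of the matter, and the only place the hypothesis $\Hom(Y'',g')=0$ (that is: $g'$ annihilates every morphism $Y''\to Y'$) is used. Commutativity of the given diagram shows that $g''$ too satisfies $g''f''=g'f'$ and $zg''=y''$. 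Hence $(\gamma-g'')f''=0$, so $\gamma-g''$ factors through $y''\colon Y''\to E[1]=\mathrm{cone}(f'')$, say $\gamma-g''=\xi y''$ with $\xi\colon E[1]\to Z$; then $z(\gamma-g'')=0$ forces $z\xi y''=0$, so the morphism $\xi y''\colon Y''\to Z$ factors through $g'$, say $\xi y''=g'\eta$ with $\eta\colon Y''\to Y'$; and $\Hom(Y'',g')=0$ gives $g'\eta=0$. Therefore $\gamma-g''=\xi y''=0$.

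The main obstacle is exactly this step $\gamma=g''$. A priori the cone of $\binom{-f''}{f'}$, and with it the comparison morphism $\gamma$, is determined only up to a non-canonical isomorphism, so the abstract output of (TR4) need not come equipped with the prescribed maps $g''$, $b$, $z$ appearing in the diagram; the vanishing $\Hom(Y'',g')=0$ is precisely what rigidifies the relevant comparison, allowing the octahedral triangle to be transported verbatim onto the given data. The remaining work — pinning down which formulation of (TR4) to quote, and tracking signs through the interchange of summands so that the connecting morphism reads $h$ (presented by the hypothesis both as $a[1]b$ and as $x[1]z$) — is routine.
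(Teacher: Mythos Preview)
Your argument is correct. Both you and the paper invoke the strong form of (TR4) and then use the hypothesis $\Hom(Y'',g')=0$ to pin down the one non-canonical map, but you apply (TR4) to the \emph{other} composable pair. The paper feeds in the two rows together with the $f''$-column, i.e.\ the composition $I\xrightarrow{a}X\xrightarrow{f''}Y''$; this produces an exact triangle
\[
X\xrightarrow{\binom{-f'}{f''}}Y'\oplus Y''\xrightarrow{(\bar g',\,g'')}Z\xrightarrow{h}X[1]
\]
in which $g''$ and $h=a[1]b$ are already the prescribed maps but $\bar g'$ is not. The paper then invokes (TR3) to compare the new third column with the given one, obtaining an automorphism $\theta$ of $Z$ with $\theta\bar g'=g'$, and checks via the vanishing hypothesis that $\theta g''=g''$ and $h\theta=h$, so that conjugating by $\theta$ yields the asserted triangle. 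Your route --- (TR4) on $E\xrightarrow{x}X\xrightarrow{f'}Y'$ --- has the virtue that $g'$ is built in from the start and only $\gamma$ needs identifying with $g''$; your two-step factorisation $\gamma-g''=\xi y''=g'\eta=0$ does this cleanly without any auxiliary (TR3) comparison. The price is that your three input triangles are rotations of the given columns rather than the rows themselves, so signs must be tracked (and indeed, with the paper's sign convention for (TR2), the connecting map first appears as $-x[1]z$; one then swaps summands and applies the isomorphism $(-1,1,1,-1)$ to arrive at the stated triangle with connecting map $h$). This is, as you say, routine.
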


\begin{proof}
Apply (TR4) to the exact triangles coming from both rows and the first column. We get an exact commutative diagram as above, but where the second column is
\[ \begin{tikzcd} Y' \arrow{r}{\bar g'} & Z \arrow{r}{\bar z} & E[1] \arrow{r}{(f'x)[1]} & Y'[1] \end{tikzcd} \]
Moreover, $x[1]\bar z=h$ and this fits in the exact triangle
\[ \begin{tikzcd} X \arrow{r}{\binom{-f'}{f''}} & Y'\oplus Y'' \arrow{r}{(\bar g',g'')} & Z \arrow{r}{h} & X[1] \end{tikzcd} \]

By (TR3) there exists an isomorphism of exact triangles
\[ \begin{tikzcd}
Y' \arrow{r}{\bar g'} \arrow[-, double equal sign distance]{d} & Z \arrow{r}{\bar z} \arrow[dotted]{d}{\exists\theta} & E[1] \arrow{r}{(f'x)[1]} \arrow[-, double equal sign distance]{d} & Y'[1] \arrow[-, double equal sign distance]{d}\\
Y' \arrow{r}{g'} & Z \arrow{r}{z} & E[1] \arrow{r}{(f'x)[1]} & Y'[1]
\end{tikzcd} \]
Since $z\theta=\bar z$ and $x[1]\bar z=h=x[1]z$, we have $h\theta=h$. Also, $z(\theta-1)g''=\bar zg''-zg''=y''-y''=0$, so our vanishing condition implies $\theta g''=g''$. We therefore have an isomorphism of triangles
\[ \begin{tikzcd}
X \arrow{r}{\binom{-f'}{f''}} \arrow[-, double equal sign distance]{d} & Y'\oplus Y'' \arrow{r}{(\bar g',g'')} \arrow[-, double equal sign distance]{d} & Z \arrow{r}{h} \arrow{d}{\theta} & X[1] \arrow[-, double equal sign distance]{d}\\
X \arrow{r}{\binom{-f'}{f''}} & Y'\oplus Y'' \arrow{r}{(g',g'')} & Z \arrow{r}{h} & X[1]
\end{tikzcd} \]
Since the top row is exact, so too is the bottom row.
\end{proof}

Suppose we have a triangle in $\cD$
\[ \begin{tikzcd} X \arrow{r}{f} & Y \arrow{r}{g} & Z \arrow{r}{h} & X[1] \end{tikzcd} \]
Decompose $Y=Y'\oplus Y''$ such that $Y'\in\cH[n]$ and every summand of $Y''$ is in $\cH[m]$ for some $m>n$. Write $f=\binom{-f'}{f''}\colon X\to Y'\oplus Y''$ and $g=(g',g'')\colon Y'\oplus Y''\to Z$. Applying (TR4') gives us an exact commutative diagram in $\cD$ as in the Key Lemma. By induction we may assume that $F$ sends both rows and both columns to exact triangles in $\cT$. Then, since $\Hom_\cT(Y'',Y')=0$, we can apply the Key Lemma to deduce that the exact triangle in $\cD$
\[ \begin{tikzcd} X \arrow{r}{f} & Y \arrow{r}{g} & Z \arrow{r}{h} & X[1] \end{tikzcd} \]
is sent to an exact triangle in $\cT$

After rotation and shift this reduces the problem to when one term of the triangle lies in $\cH$.

Assume therefore that $X\in\cH$. If $Y=A[1]\in\cH[1]$, then also $Z=B[1]\in\cH[1]$ (it is an extension of $X[1]$ by $A[1]$). Thus the triangle in $\cD$
\[ \begin{tikzcd} X \arrow{r}{\epsilon} & A[1] \arrow{r}{a[1]} & B[1] \arrow{r}{b[1]} & X[1] \end{tikzcd} \]
corresponds to the short exact sequence in $\cH$
\[ \begin{tikzcd} \epsilon\colon 0 \arrow{r} & A \arrow{r}{a} & B \arrow{r}{b} & X \arrow{r} & 0 \end{tikzcd} \]
and hence is sent to an exact triangle in $\cT$.

Suppose instead that $Y\in\cH$. The following lemma describes precisely the exact triangles in $\cT$ (or $\cD$) coming from such a map $f\colon X\to Y$ in $\cH$.

\begin{Lem}
Let $f\colon X\to Y$ be a morphism in $\cH$. Then we have an exact triangle in $\cT$ (or $\cD$)
\[ \begin{tikzcd} X \arrow{r}{f} & Y \arrow{r}{\binom{-\epsilon}{y}} & K[1]\oplus C \arrow{r}{(x[1],\eta)} & X[1], \qquad C,K\in\cH \end{tikzcd} \]
if and only if we have an exact commutative diagram in $\cH$ (a pull-back/push-out diagram)
\[ \begin{tikzcd}{}
& & 0 \arrow{d} & 0 \arrow{d}\\
0 \arrow{r} & K \arrow{r}{x} \arrow[-, double equal sign distance]{d} & X \arrow{r}{f_1} \arrow{d}{\iota} & I \arrow{r} \arrow{d}{f_2} & 0\\
0 \arrow{r} & K \arrow{r}{\iota'} & E \arrow{r}{\pi} \arrow{d}{\pi'} & Y \arrow{r} \arrow{d}{y} & 0\\
& & C \arrow[-, double equal sign distance]{r} \arrow{d} & C \arrow{d}\\
& & 0 & 0
\end{tikzcd} \]
with $f=f_2f_1$, and where $\epsilon$ and $\eta$ are the extension classes of the middle row and middle column, respectively.
\end{Lem}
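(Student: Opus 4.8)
The plan is to deduce both implications from the octahedral axiom applied to a factorisation of $f$, using the vanishing $\Hom_\cT(\cH,\cH[-n])=0$ ($n>0$) repeatedly to pin down the maps involved. For the ``if'' direction I would build the triangle from the diagram via an octahedron on a factorisation of $f$ through $E$; for the ``only if'' direction I would recover the kernel, cokernel, image and then $E$ itself from the triangle, the object $E$ arising as (a shift of) a cone.

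\emph{Constructing the triangle from the diagram.} Assume the $3\times3$ diagram is given. By definition of the exact structure on $\cH$, each of its four short exact sequences is an exact triangle in $\cT$; in particular $X\xrightarrow{\iota}E\xrightarrow{\pi'}C\xrightarrow{\eta}X[1]$ and $K\xrightarrow{\iota'}E\xrightarrow{\pi}Y\xrightarrow{\epsilon}K[1]$ are triangles, and commutativity of the diagram gives $\pi\iota=f$, $\iota'=\iota x$, $\pi'=y\pi$ and $\pi'\iota'=0$. Applying (TR4) to the composable pair $X\xrightarrow{\iota}E\xrightarrow{\pi}Y$ produces a triangle $C\xrightarrow{u}\mathrm{cone}(f)\xrightarrow{v}K[1]\xrightarrow{w}C[1]$ whose connecting map satisfies $w=\pm(\pi'\iota')[1]=0$; hence it splits, $\mathrm{cone}(f)\cong C\oplus K[1]$, with $u,v$ the summand inclusion and projection. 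It remains to identify the second and third maps of the triangle $X\xrightarrow{f}Y\to\mathrm{cone}(f)\to X[1]$: the octahedral compatibilities give $vg=\epsilon$ and $hu=\eta$; $\Hom_\cT(K[1],C)=\Hom_\cT(K,C[-1])=0$ then forces the remaining component of $g$ to be $y$; and a last octahedral relation together with $\iota$ being monic forces the $K[1]$-component of $h$ to be $\pm x[1]$. Composing with the automorphism $\mathrm{diag}(\pm1,1)$ of $K[1]\oplus C$ yields exactly the asserted triangle, the sign in $-\epsilon$ being chosen precisely for this.

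\emph{Constructing the diagram from the triangle.} Now assume the triangle is given. Rotating it shows $fx=0$ and $yf=0$. If $t\colon Z\to X$ in $\cH$ satisfies $ft=0$, then $t$ factors through $\mathrm{cone}(f)[-1]=K\oplus C[-1]\to X$, and the $C[-1]$-component of any such lift vanishes since $\Hom_\cT(Z,C[-1])=0$, so $t$ factors through $x$; moreover $x$ is monic, since a map $Z\to K$ killed by $x$ lifts through $Y[-1]\to\mathrm{cone}(f)[-1]$ and $\Hom_\cT(Z,Y[-1])=0$. Thus $x$ is a kernel, and dually $y$ a cokernel, for $f$; put $I:=\Ima f$, giving the rows/columns $0\to K\xrightarrow{x}X\to I\to0$ and $0\to I\to Y\xrightarrow{y}C\to0$. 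Set $E:=\mathrm{cone}(\epsilon)[-1]$; being an extension of $Y$ by $K$ it lies in $\cH$, and comes equipped with $\pi\colon E\to Y$ and $\iota'\colon K\to E$. Applying (TR4) to $Y\xrightarrow{g}K[1]\oplus C\xrightarrow{-(1,0)}K[1]$, whose composite is $\epsilon$, gives a fourth triangle which is a shift of $X\xrightarrow{\iota}E\xrightarrow{\pi'}C\xrightarrow{\eta}X[1]$, so the column $0\to X\to E\to C\to0$ has class $\eta$, while the octahedral compatibilities yield $\pi\iota=f$, $\iota'=\iota x$ and $\pi'\iota'=0$. A short five lemma argument identifies $\ker(y\pi)$ with $\iota(X)$, so we may take $\pi'=y\pi$; together with $\iota'=\iota x$ this makes every square of the diagram commute, and $0\to K\xrightarrow{\iota'}E\xrightarrow{\pi}Y\to0$ has class $\epsilon$ by construction of $E$.

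I expect the main difficulty to be the rigidity and sign bookkeeping. The octahedral axiom yields $\mathrm{cone}(f)$ only up to a non-canonical isomorphism, so forcing its structure maps to be exactly $\binom{-\epsilon}{y}$ and $(x[1],\eta)$ --- not merely up to an automorphism of $K[1]\oplus C$ --- requires invoking the vanishing of the negative $\Hom$ groups at precisely the right points, together with the sharp form of (TR4) recorded in the appendix; making every sign consistent is the fiddly part. A secondary point in the converse direction is checking that the $E$ produced makes all squares of the $3\times3$ diagram commute strictly, in particular $y\pi=\pi'$, which is where the five lemma enters.
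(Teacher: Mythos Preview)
Your argument is correct, but the route differs from the paper's in both directions.

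For the ``if'' direction, you factor $f=\pi\iota$ through $E$ and run the octahedron on that composite, obtaining a split fourth triangle $C\to\mathrm{cone}(f)\to K[1]\xrightarrow{0}C[1]$; you then identify the four components of the second and third maps one by one, using the octahedral relations $v'g_f=\epsilon$, $h_fu'=\eta$, $g_f\pi=u'\pi'$ and $wv'=\iota[1]h_f$, each time invoking $\Hom_\cT(K[1],C)=0$ to kill the ambiguity coming from the non-canonical splitting. The paper instead writes down the full octahedron directly from the four short exact sequences in the $3\times3$ diagram (so with $I$, not $E$, in the top-left corner) and then applies the Key Lemma, which packages exactly this ``pin down the components via vanishing'' step into a single reusable statement. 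Your approach is more elementary and self-contained; the paper's buys a clean tool that is re-used elsewhere in the proof that $F$ is exact.

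For the converse, after showing $x$ and $y$ are kernel and cokernel (the paper only spells out the cokernel argument, leaving the dual implicit), you build $E$ and then run (TR4) on the composite $Y\xrightarrow{g}K[1]\oplus C\xrightarrow{-(1,0)}K[1]$. The paper instead applies the variant (\AxiomA) with the given triangle itself as the input homotopy-cartesian square $(\dagger)$; this yields the octahedron in one step with $\pi'=y\pi$ and the class $\eta$ falling out of the second column automatically, so no separate five-lemma step is needed. Your version works too, but as you anticipate, the sign bookkeeping is genuinely delicate: the relation $w_0v_0'=f_0[1]h_0'$ gives $\pi'=-y\pi$ rather than $\pi'=y\pi$, and one must then check that the automorphism of $C$ absorbing this sign does not disturb the class $\eta$; it does not, because $\Hom_\cT(K[1],C)=0$ again forces the relevant correction term to vanish. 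The paper's choice of (\AxiomA) is precisely what sidesteps this.
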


\begin{proof}
Since $\cH$ is abelian, given $f\colon X\to Y$, we can form the top row and second column; since $\cH$ is hereditary, the pull-back along $f_2$ yields an epimorphism $\Ext^1_\cH(Y,K)\to\Ext^1_\cH(I,K)$, so we can form the middle row, and hence also the first column. Let $\epsilon'=\epsilon f_2$ (pull-back) be the extension class of the first row, and $\eta'=f_1\eta$ (push-out) be the extension class of the second column. We can then form the exact commutative diagram in $\cT$
\[ \begin{tikzcd}
I \arrow{r}{f_2} \arrow[-, double equal sign distance]{d} & Y \arrow{r}{y} \arrow{d}{\epsilon} & C \arrow{r}{\eta'} \arrow{d}{\eta} & I[1] \arrow[-, double equal sign distance]{d}\\
I \arrow{r}{\epsilon'} & K[1] \arrow{r}{x[1]} \arrow{d}{\iota'[1]} & X[1] \arrow{r}{f_1[1]} \arrow{d}{\iota[1]} & I[1]\\
& E[1] \arrow[-, double equal sign distance]{r} \arrow{d}{\pi[1]} & E[1] \arrow{d}{\pi'[1]}\\
& Y[1] \arrow{r}{y[1]} & C[1]
\end{tikzcd} \]
Since $\Hom_\cT(K[1],C)=0$, we can apply the Key Lemma to get (after rotation) the exact triangle
\[ \begin{tikzcd} X \arrow{r}{f} & Y \arrow{r}{\binom{-\epsilon}{y}} & K[1]\oplus C \arrow{r}{(x[1],\eta)} & X[1] \end{tikzcd} \]
In particular, the cone of $f$ is isomorphic to $\Ker(f)[1]\oplus\Coker(f)$.

Conversely, consider any exact triangle in $\cT$
\[ \begin{tikzcd} X \arrow{r}{f} & Y \arrow{r}{\binom{-\epsilon}{y}} & K[1]\oplus C \arrow{r}{(x[1],\eta)} & X[1] \end{tikzcd} \]
where $C,K\in\cH$. Applying $\Hom_\cT(-,Z)$ for $Z\in\cH$ shows that $y$ is a cokernel for $f$, say giving the short exact sequence in $\cH$
\[ \begin{tikzcd} \eta'\colon 0 \arrow{r} & I \arrow{r}{f_2} & Y \arrow{r}{y} & C \arrow{r} & 0 \end{tikzcd} \]
We also have the short exact sequence in $\cH$ given by $\epsilon$
\[ \begin{tikzcd} \epsilon\colon 0 \arrow{r} & K \arrow{r}{\iota'} & E \arrow{r}{\pi} & Y \arrow{r} & 0 \end{tikzcd} \]
We may therefore apply (TR4') to obtain an exact commutative diagram in $\cT$ as above, which in turn yields a pull-back/push-out diagram in $\cH$.
\end{proof}

Returning to the proof of the exactness of $F$, if we have an exact triangle in $\cD$
\[ \begin{tikzcd} X \arrow{r}{f} & Y \arrow{r}{g} & Z \arrow{r}{h} & X[1] \end{tikzcd} \]
with $X,Y\in\cH$, then by the lemma above it induces a pull-back/push-out diagram in $\cH$, and hence is sent to an exact triangle in $\cT$.

More generally, consider an exact triangle in $\cD$
\[ \begin{tikzcd} X \arrow{r}{\binom{-f'}{f''}} & Y'\oplus Y'' \arrow{r}{(g',g'')} & Z \arrow{r}{h} & X[1] \end{tikzcd} \]
with $X,Y'\in\cH$ and $Y''\in\cH[1]$. As above, the cone of $f''$ is of the form $E[1]$ for some $E\in\cH$. By (TR4') this is also the cone of $g'$, so $Z$ is the cone of a morphism $E\to Y'$. In particular, $Z=Z'\oplus Z''$ with $Z'\in\cH$ and $Z''\in\cH[1]$. It therefore follows from what we have already shown that the exact triangles involving $f''$ and $g'$ are sent to exact triangles in $\cT$, as is the exact triangle involving $f'$. On the other hand, writing $g''=\binom{0}{\bar g''}\colon Y''\to Z'\oplus Z''$, we can apply \hyperref[Lem:split]{Lemma \ref*{Lem:split}} to see that the corresponding exact triangle is the direct sum of the exact triangle for $\bar g''$ together with the trivial exact triangle
\[ \begin{tikzcd} 0 \arrow{r} & Z' \arrow{r}{\sim} & Z' \arrow{r} & 0 \end{tikzcd} \]
and both of these summands are sent to exact triangles in $\cT$. We may therefore apply the Key Lemma to deduce that the exact triangle $(f,g,h)$ in $\cD$ is sent to an exact triangle in $\cT$.

Finally, given any map $f\colon X\to Y$ in $\cD$ with $X\in\cH$, we can decompose $Y=Y'\oplus Y''$ with $Y'\in\cH\oplus\cH[1]$ and $Y''\in\bigoplus_{n\neq0,1}\cH[n]$, in which case $f=\binom{f'}{0}\colon X\to Y'\oplus Y''$, and by \hyperref[Lem:split]{Lemma \ref*{Lem:split}} again, the exact triangle arising from $f$ is the direct sum of the exact triangle coming from $f'$ and the trivial exact triangle
\[ \begin{tikzcd} 0 \arrow{r} & Y'' \arrow{r}{\sim} & Y'' \arrow{r} & 0 \end{tikzcd} \]
and we already know that both of these are sent to exact triangles in $\cT$. This proves that each exact triangle in $\cD$ having one term in $\cH$ is sent to an exact triangle in $\cT$, and thus completes the proof. We conclude that $F$ is a triangle functor.

\section{Constructing admissible hereditary abelian subcategories, after C.M.~Ringel}

Let $\cT$ be an essentially small triangulated category in which every object is isomorphic to a finite direct sum of indecomposable objects. For example, any Krull-Schmidt triangulated category satisfies this, as do the bounded homotopy categories $\mathscr K^b(\mathrm{proj}\,R)$ of finitely-generated projective modules over some ring $R$. We say that $\cT$ is a block provided $\cT$ is not triangle equivalent to a non-trivial product of triangulated categories.


A path $X\rightsquigarrow Y$ between indecomposable objects in $\cT$ is a finite sequence $X=X_0,X_1,\ldots,X_n=Y$ of indecomposable objects such that for all $i$, either there is a non-zero homomorphism $X_i\to X_{i+1}$ or else $X_{i+1}=X_i[1]$. A walk is then a series of forwards or backwards paths, and being connected by a walk defines an equivalence relation on the (isomorphism classes of) indecomposable objects. The corresponding equivalence classes are called the path-connected components of $\cT$.

\begin{Lem}
The category $\cT$ is a block if and only if it is path-connected.
\end{Lem}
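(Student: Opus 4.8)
The plan is to prove both directions by contraposition. The common tool is a pair of elementary remarks read off from the definitions: if two indecomposable objects $X,Y$ lie in \emph{distinct} path-connected components then $\Hom_\cT(X,Y)=0$, since a non-zero morphism would already be a path $X\rightsquigarrow Y$; and each path-connected component is stable under $[\pm1]$, since $X,X[1]$ is a path and likewise $X[-1],X$. Consequently, for any partition of the isomorphism classes of indecomposables into two non-empty, shift-stable families between which all $\Hom$-groups vanish — for instance, one family a union of path-connected components and the other its complement — the associated full subcategories $\cT_1,\cT_2$ of finite direct sums from each family are shift-stable and closed under summands, every object of $\cT$ splits as $X\cong X_1\oplus X_2$ with $X_i\in\cT_i$ (uniquely up to isomorphism, by the finite-direct-sum hypothesis and the vanishing of cross-family $\Hom$'s), and every morphism is block-diagonal for such splittings.

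\emph{Block implies path-connected.} Supposing $\cT$ is not path-connected, I would fix such a partition with both families non-empty. The discussion above already exhibits $\cT$ as the product of the \emph{additive} categories $\cT_1$ and $\cT_2$, so it remains to promote this to a triangle equivalence. The crucial point is that $\cT_1$, and symmetrically $\cT_2$, is closed under cones: given $f\colon X\to Y$ in $\cT_1$, complete it to a triangle $X\xrightarrow{f}Y\xrightarrow{g}Z\xrightarrow{h}X[1]$ and write $Z\cong Z_1\oplus Z_2$; if $Z_2\neq0$, choose an indecomposable summand $W$ of $Z_2$ with split monomorphism $\iota\colon W\to Z$. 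Then $h\iota\in\Hom_\cT(W,X[1])=0$ because $X[1]\in\cT_1$, so exactness of $\Hom_\cT(W,-)$ on the triangle gives $\iota=gs$ for some $s\colon W\to Y$; but $s\in\Hom_\cT(W,Y)=0$, forcing the split mono $\iota$ to vanish, a contradiction. Hence $Z\in\cT_1$ and $\cT_1,\cT_2$ are triangulated subcategories. An arbitrary triangle of $\cT$ is then the direct sum of a triangle of $\cT_1$ and one of $\cT_2$ — decompose its first morphism block-diagonally, complete each block inside the respective $\cT_i$, take the direct sum, and appeal to uniqueness of cones — so $\cT\simeq\cT_1\times\cT_2$ with both factors non-zero, and $\cT$ is not a block.

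\emph{Path-connected implies block.} Supposing $\cT\simeq\cT_1\times\cT_2$ with both factors non-zero, I would note that any indecomposable of the product has the form $(X_1,0)$ or $(0,X_2)$ with $X_i$ indecomposable, since $(X_1,X_2)\cong(X_1,0)\oplus(0,X_2)$. There are no non-zero morphisms between objects of the first kind and objects of the second kind, and the shift preserves the kind, so no path — and hence no walk — can connect an indecomposable of one kind with one of the other. Since both $\cT_i$ are non-zero, indecomposables of both kinds exist, so $\cT$ has at least two path-connected components, i.e.\ is not path-connected.

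The step I expect to be the only genuine obstacle is the closure of $\cT_1$ under cones in the first direction — equivalently, that $\cT_1,\cT_2$ are triangulated subcategories and that triangles split along the product decomposition; once that is established, everything else is a formal consequence of the vanishing of cross-component $\Hom$'s together with the finite-direct-sum hypothesis on $\cT$.
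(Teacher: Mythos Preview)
Your proof is correct and follows essentially the same approach as the paper: both argue by contraposition, decomposing a non-path-connected $\cT$ into two triangulated subcategories to contradict being a block, and conversely observing that a product decomposition separates indecomposables into walk-incomparable families. You have in fact supplied the details---closure of $\cT_1$ under cones via the vanishing of cross-component $\Hom$'s, and the resulting splitting of triangles---that the paper omits under the phrase ``it is easy to see''.
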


\begin{proof}
Take a non-zero indecomposable object $X$ and consider the set $S'$ of indecomposables reachable by walks starting from $X$. Let $S''$ be the complement of $S'$ in the set of all indecomposable objects. Define $\cT'$ and $\cT''$ to be the additive subcategories of $\cT$ generated by $S'$ and $S''$ respectively. It is easy to see that $\cT'$ and $\cT''$ are full triangulated subcategories of $\cT$, and that we have an equivalence $\cT\cong\cT'\times\cT''$. Thus if $\cT$ is a block, then $\cT=\cT'$ is path-connected.

Conversely, if $\cT$ is not a block, say having a non-trivial decomposition $\cT\cong\cT'\times\cT''$, then there is no walk from an indecomposable in $\cT'$ to an indecomposable in $\cT''$, so $\cT$ is not path-connected.
\end{proof}

\begin{Lem}
Suppose $\cT$ is a block. Then for all indecomposables $X$ and $Y$, there exists a path $X\rightsquigarrow Y[m]$ for some $m\geq0$.
\end{Lem}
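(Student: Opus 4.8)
The plan is to introduce, on the set of indecomposable objects of $\cT$, the relation ``$A\preceq B$ if there is a forwards path $A\rightsquigarrow B[m]$ for some $m\geq0$'', observe that it is a preorder, and reduce the lemma to a single statement about reversing one nonzero arrow. Reflexivity is witnessed by the length-zero path. For transitivity, note that applying the shift $[k]$ to a path gives a path again (the shift is an autoequivalence, so preserves nonzero maps and sends $[1]$-steps to $[1]$-steps); hence a path $A\rightsquigarrow B[m]$ followed by the $[m]$-shift of a path $B\rightsquigarrow C[n]$ is a path $A\rightsquigarrow C[m+n]$ with $m+n\geq0$. Moreover $\preceq$ contains every single step of a walk: a nonzero map $U\to V$ is a length-one path and $V=U[1]$ is a $[1]$-step, so in either case $U\preceq V$. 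Since a block is path-connected by the previous lemma, any two indecomposables $X,Y$ lie in the same class of the equivalence relation generated by single steps. As $\preceq$ is a preorder, it therefore suffices to check that $\preceq$ also contains the \emph{reverse} of every single step; then $\preceq$ contains the whole walk-equivalence relation, and in particular $X\preceq Y$, which is the assertion of the lemma.

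Reversing a step of the form $V=U[1]$ is trivial: if $U=V[1]$ then the length-zero path at $U$ witnesses $U\rightsquigarrow V[1]$, so $U\preceq V$. The substantial point, which I would isolate as a claim, is: \emph{if $g\colon B\to A$ is a nonzero map between indecomposables, then there is a path $A\rightsquigarrow B[1]$, hence $A\preceq B$}. If $g$ is an isomorphism this is clear, so assume it is not and complete $g$ to an exact triangle $B\xrightarrow{g}A\xrightarrow{u}Z\xrightarrow{v}B[1]$; then $Z\neq0$, since otherwise $g$ would be an isomorphism. A triangle whose third map vanishes is split, so if $v=0$ then $A\cong B\oplus Z$, contradicting indecomposability of $A$; hence $v\neq0$. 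Decomposing $Z=\bigoplus_kZ_k$ into indecomposables and writing $u=(u_k)$, $v=(v_k)$, it is enough to find a single index $k_0$ with $u_{k_0}\neq0$ \emph{and} $v_{k_0}\neq0$, for then $A\xrightarrow{u_{k_0}}Z_{k_0}\xrightarrow{v_{k_0}}B[1]$ is the desired path.

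To produce such a $k_0$, suppose for contradiction that for every $k$ we have $u_k=0$ or $v_k=0$. Grouping $Z'=\bigoplus_{v_k=0}Z_k$ and $Z''=\bigoplus_{v_k\neq0}Z_k$ gives $u=\binom{u'}{0}\colon A\to Z'\oplus Z''$ and $v=(0,v'')$, so after a rotation the triangle becomes $A\xrightarrow{\binom{u'}{0}}Z'\oplus Z''\xrightarrow{(0,v'')}B[1]\xrightarrow{-g[1]}A[1]$. By \hyperref[Lem:split]{Lemma \ref*{Lem:split}}, this triangle is the direct sum of the triangle on $u'\colon A\to Z'$ and the trivial triangle $0\to Z''\xrightarrow{\sim}Z''\to0$; in particular $B[1]\cong\mathrm{cone}(u')\oplus Z''$, and the last map $-g[1]$ is the direct sum of the connecting map $\mathrm{cone}(u')\to A[1]$ with the zero map out of $Z''$. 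Since $B[1]$ is indecomposable, either $Z''=0$ — impossible, since then $v=0$ — or $\mathrm{cone}(u')=0$, in which case $-g[1]$ is a direct sum of two zero maps, so $g=0$, again impossible. This contradiction yields the claim, and the lemma follows.

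The step I expect to be the real obstacle is precisely this last one. The natural first attempt — complete $g$ to a triangle and read off arrows $A\to Z_{k_0}$ and $Z_{k_0}\to B[1]$ — runs into the difficulty that $u$ and $v$ might be supported on disjoint sets of indecomposable summands of the cone, so that no single $Z_{k_0}$ works, and two summands of one object need not be joined by a forwards path. The way around it is to observe that such disjointness forces the triangle to split in a manner incompatible with $g$ being nonzero, which is where the lemma on direct sums of exact triangles does the work.
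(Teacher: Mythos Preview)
Your proof is correct and follows essentially the same strategy as the paper: reduce to reversing a single backward nonzero homomorphism $g\colon B\to A$, and do this by passing through an indecomposable summand of the cone of $g$ using \hyperref[Lem:split]{Lemma~\ref*{Lem:split}}. The paper organises the reduction as an iterative modification of the walk (replace each backward step and shift the tail by $[1]$, noting the number of backward steps decreases), whereas you package it via the preorder $\preceq$; these are equivalent bookkeeping devices. For the cone step, the paper is slightly more direct: it picks \emph{any} indecomposable summand $Z'$ of the cone and shows, using \hyperref[Lem:split]{Lemma~\ref*{Lem:split}} and its dual together with the indecomposability of $A$ and $B$, that both components $A\to Z'$ and $Z'\to B[1]$ are already nonzero---so your contradiction argument grouping the summands is not needed, though it does have the minor virtue of invoking \hyperref[Lem:split]{Lemma~\ref*{Lem:split}} only once rather than also its dual.
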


\begin{proof}
Since $\cT$ is path-connected, by the previous lemma, we know that there is a walk from $X$ to $Y$, so we have a finite sequence of indecopmosables
\[ X = X_0, X_1, \ldots, X_n=Y\]
where either there is a non-zero homomorphism $X_i\to X_{i+1}$, or a non-zero homomorphism $X_{i+1}\to X_i$, or else $X_{i+1}=X_i[\pm1]$.

If for some $i$ we have $X_{i+1}=X_i[-1]$, then we replace the sequence by
\[ X_0, X_1, \ldots, X_i, X_{i+2}[1], \ldots, X_n[1]. \]
If instead there is a non-zero (and non-invertible) homomorphism $f\colon X_{i+1}\to X_i$, then we have a triangle
\[ \begin{tikzcd} X_{i+1} \arrow{r}{f} & X_i \arrow{r} & Z \arrow{r} & X_{i+1}[1] \end{tikzcd} \]
Writing $Z=Z'\oplus Z''$ with $Z'$ indecomposable, and using that $X_i$ and $X_{i+1}$ are indecomposable, we deduce from \hyperref[Lem:split]{Lemma \ref*{Lem:split}} (and its dual) that the morphisms $X_i\to Z'$ and $Z'\to X_{i+1}[1]$ are both non-zero. We therefore replace the sequence by
\[ X_0, \ldots, X_i, Z', X_{i+1}[1], \ldots, X_n[1]. \]

We repeat this process, noting that at each step the number of backwards paths decreases. Thus the process necessarily stops, and the resulting modified sequence is then a path $X\rightsquigarrow Y[m]$ for some $m\geq0$.
\end{proof}

Given an indecomposable object $M\in\cT$, consider the additive subcategories
\begin{align*}
\cT^{\leq0}_M :&= \mathrm{add}\{\textrm{indec.~$Y$ such that there is a path $M\rightsquigarrow Y$}\}\\
\cT^{\geq0}_M :&= \mathrm{add}\{\textrm{indec.~$Z$ such that there is no path $M\not\rightsquigarrow Z[-1]$}\}.
\end{align*}

\begin{Prop}
The pair $(\cT^{\leq0}_M,\cT^{\geq0}_M)$ defines a split t-structure on $\cT$.
\end{Prop}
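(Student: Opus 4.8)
The plan is to unwind the two definitions into purely combinatorial statements about paths and then verify the t-structure axioms directly; since the connecting morphism will turn out to be zero by construction, splitness comes for free.

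First I would record three elementary observations. (i) With the paper's conventions $\cT^{>0}_M=\cT^{\geq0}_M[-1]$ and $\cT^{<0}_M=\cT^{\leq0}_M[1]$, an indecomposable $W$ lies in $\cT^{>0}_M$ exactly when $M\not\rightsquigarrow W$ (because $W\in\cT^{\geq0}_M[-1]$ iff $W[1]\in\cT^{\geq0}_M$ iff $M\not\rightsquigarrow W$), and $W$ lies in $\cT^{<0}_M$ exactly when $M\rightsquigarrow W[-1]$. In particular the indecomposables split into the two complementary classes $\{W:M\rightsquigarrow W\}$, spanning $\cT^{\leq0}_M$, and $\{W:M\not\rightsquigarrow W\}$, spanning $\cT^{>0}_M$. (ii) Any path can be prolonged by a shift, since $W=(W[-1])[1]$; hence $M\rightsquigarrow X$ forces $M\rightsquigarrow X[n]$ for every $n\geq0$. (iii) A non-zero morphism between indecomposables is a path of length one, and paths concatenate.

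Using these, (t2) is immediate: if $W\in\cT^{\geq1}_M$, i.e.\ $M\not\rightsquigarrow W$, then also $M\not\rightsquigarrow W[-1]$ by (ii), so $W\in\cT^{\geq0}_M$; dually for $\cT^{\leq0}_M\subseteq\cT^{\leq1}_M$. Axiom (t1), $\Hom(\cT^{\leq0}_M,\cT^{>0}_M)=0$, reduces (as $\Hom$ respects finite biproducts) to showing $\Hom(Y,W)=0$ for indecomposables with $M\rightsquigarrow Y$ and $M\not\rightsquigarrow W$; but a non-zero map $Y\to W$ would, by (iii), extend a path $M\rightsquigarrow Y$ to a path $M\rightsquigarrow W$, a contradiction. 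For (t3), given $X$ I would use the Krull--Schmidt-type hypothesis to write $X=P\oplus Q$ where the indecomposable summands of $P$ are exactly those not reachable from $M$ and those of $Q$ the ones reachable from $M$; by (i), $P\in\cT^{>0}_M$ and $Q[1]\in\cT^{<0}_M$. The triangle $X\xrightarrow{\mathrm{pr}}P\xrightarrow{0}Q[1]\xrightarrow{\mathrm{incl}}X[1]$ is the direct sum of the exact triangles $P\xrightarrow{\mathrm{id}}P\to0\to P[1]$ and $Q\to0\to Q[1]\xrightarrow{\mathrm{id}}Q[1]$, hence is itself exact, and it realises (t3) with $X^+=P$, $X^-=Q[1]$ and $\delta_X=0$; this simultaneously gives (t3) and shows the t-structure is split.

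I do not anticipate a genuine difficulty here: the only thing requiring care is bookkeeping with the shift conventions for $\cT^{\geq n}_M$ and $\cT^{\leq n}_M$, and in particular verifying the complementarity in (i), which is precisely what makes the decomposition used in (t3) possible. (Note that the block hypothesis plays no role in this proposition; only the decomposition of objects into indecomposables is used.)
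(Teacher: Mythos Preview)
Your proof is correct and follows essentially the same approach as the paper: both arguments verify (t1) by observing that a non-zero map would concatenate with the path from $M$, verify (t2) via the trivial prolongation of paths by shifts, and obtain (t3) with $\delta_X=0$ by splitting $X$ according to whether its indecomposable summands are reachable from $M$. The only organizational difference is that you isolate the complementarity $\{W:M\rightsquigarrow W\}\sqcup\{W:M\not\rightsquigarrow W\}$ as a preliminary observation and treat (t3) for general $X$ directly, whereas the paper handles indecomposable $X$ and leaves the passage to direct sums implicit.
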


\begin{proof}
We check the axioms of a split t-structure. Take indecomposable objects $Y\in\cT^{\leq0}_M$ and $Z\in\cT^{\geq0}_M$.
\begin{enumerate}
\item[(t1)] $\Hom(Y,Z[-1])=0$, since otherwise we have paths $M\rightsquigarrow Y\rightsquigarrow Z[-1]$, a contradiction.
\item[(t2)] $Y[1]\in\cT^{\leq0}_M$, since we have paths $M\rightsquigarrow Y\rightsquigarrow Y[1]$. Also $Z[-1]\in\cT^{\geq0}_M$, since otherwise we have paths $M\rightsquigarrow Z[-2]\rightsquigarrow Z[-1]$.
\item[(t3)] Let $X$ be indecomposable. If $X\in\cT^{\leq0}_M$, then set $X^-:=X[1]$ and $X^+:=0$. Otherwise there is no path from $M$ to $X$, so $X[1]\in\cT^{\geq0}_M$, so set $X^+:=X$ and $X^-:=0$. In both cases we have $X^+\in\cT^{>0}_M$ and $X^-\in\cT^{<0}_M$ and an exact triangle
\[ \begin{tikzcd} X \arrow{r} & X^+ \arrow{r}{0} & X^- \arrow{r} & X[1] \end{tikzcd} \qedhere \]
\end{enumerate}
\end{proof}

\begin{Prop}
Suppose that $\cT$ is a block. Then the split t-structure defined by $M$ is bounded if and only if there is no path from $M$ to $M[-1]$.
\end{Prop}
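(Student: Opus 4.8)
The plan is to prove the two implications separately, leaning on the two lemmas just established: that a block is path-connected, and that in a block there is, for every pair of indecomposables $X,Y$, a path $X\rightsquigarrow Y[m]$ with $m\ge0$. I will freely use that a path may be shifted by any power of $[1]$ --- the shift is an autoequivalence, so it preserves both non-zero homomorphisms and the relation $X_{i+1}=X_i[1]$ --- and that two paths with a common endpoint may be concatenated.

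First I would treat the direction ``not bounded if there is a path $M\rightsquigarrow M[-1]$''. Given such a path, shifting it by $[-i]$ and concatenating for $i=0,\dots,k-1$ produces a path $M\rightsquigarrow M[-k]$ for every $k\ge0$; thus $M[-k]$ belongs to the set of indecomposables generating $\cT^{\le0}_M$, whence $M\in\cT^{\le-k}_M$ for all $k\ge0$. If the t-structure were bounded, $M$ would lie in $\cT^{\ge-N}_M=\cT^{>-N-1}_M$ for some $N\ge0$; but then, since $M$ also lies in $\cT^{\le-N-1}_M$, axiom (t1) (shifted by $N+1$) would force $\Hom(M,M)=0$, which is absurd as $M\ne0$.

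Next I would prove ``bounded if there is no path $M\rightsquigarrow M[-1]$''. Since every object of $\cT$ is a finite direct sum of indecomposables, and since each $\cT^{[-n,n]}_M=\cT^{\ge0}_M[n]\cap\cT^{\le0}_M[-n]$ is closed under finite direct sums and satisfies $\cT^{[-n',n']}_M\subseteq\cT^{[-n,n]}_M$ for $n'\le n$ by axiom (t2), it suffices to put each indecomposable $X$ into some $\cT^{[-n,n]}_M$. The upper bound is immediate: the path lemma applied to $(M,X)$ gives a path $M\rightsquigarrow X[m']$ with $m'\ge0$, so $X[m']\in\cT^{\le0}_M$ and $X\in\cT^{\le m'}_M$. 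For the lower bound, apply the path lemma to $(X,M)$ to get a path $X\rightsquigarrow M[m]$ with $m\ge0$; shifting it by $[-m-1]$ yields a path $X[-m-1]\rightsquigarrow M[-1]$. Were there a path $M\rightsquigarrow X[-m-1]$, concatenating the two would give the forbidden path $M\rightsquigarrow M[-1]$; so no such path exists, i.e.\ $X[-m]$ lies in the generating set of $\cT^{\ge0}_M$, giving $X\in\cT^{\ge-m}_M$. Then $X\in\cT^{[-n,n]}_M$ for $n=\max(m,m')$, and summing over the indecomposable summands of an arbitrary object finishes the argument.

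I do not expect a genuine obstacle once both path lemmas are in hand; the single idea worth isolating is that a path $X\rightsquigarrow M[m]$, shifted downwards, can be used to ``close up'' a hypothetical path $M\rightsquigarrow X[-m-1]$ into the loop $M\rightsquigarrow M[-1]$ excluded by hypothesis. The rest is bookkeeping: verifying that paths are stable under shift and concatenation, and carefully matching the indexing conventions for $\cT^{\ge k}_M$ and $\cT^{\le k}_M$ against the conditions defining their generating sets of indecomposables.
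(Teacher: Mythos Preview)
Your argument is correct and follows essentially the same route as the paper: both directions hinge on the same loop-closing trick, shifting a path $X\rightsquigarrow M[n]$ downward to convert a hypothetical path $M\rightsquigarrow X[m]$ with $m$ too negative into the forbidden $M\rightsquigarrow M[-1]$. The only cosmetic difference is that, in the bounded direction, the paper takes the minimal $m$ with a path $M\rightsquigarrow X[m]$ and observes that $X[m]$ then lies in the heart, whereas you establish the upper and lower bounds on $X$ separately; and in the unbounded direction you work with $M$ alone rather than with a general indecomposable, which is enough.
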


\begin{proof}
If there is a path $M\rightsquigarrow M[-1]$, then there is a path $M\rightsquigarrow M[-m]$ for all $m\geq0$. Since $\cT$ is a block, for each indecomposable $X$ there is a path $M\rightsquigarrow X[m]$ for some $m\geq0$, and hence $\cT^{\leq0}_M=0$. Thus $\cT=\cT^{\geq0}_M$.

Suppose instead that there is no path from $M$ to $M[-1]$. Given any indecomposable $X$, the set of integers $m$ for which there is a path $M\rightsquigarrow X[m]$ is bounded below. For, since $\cT$ is a block there is a path $X\rightsquigarrow M[n]$ for some $n\geq0$. Thus if there were a path $M\rightsquigarrow X[m]$ with $m<-n$, then there would be a path $M\rightsquigarrow X[m]\rightsquigarrow M[m+n]\rightsquigarrow M[-1]$, a contradiction.

Now, taking $m$ minimal such that there is a path $M\rightsquigarrow X[m]$, we see that $X[m]$ is in the heart $\cH$ of the t-structure, so $X\in\cH[-m]$. Hence the t-structure is bounded.
\end{proof}

\begin{Thm}
Assume that $\cT$ is a block, and let $M\in\cT$ be indecomposable such that there is no path from $M$ to $M[-1]$. Then $\cT$ is triangle equivalent to the bounded derived category $\cD$ of the heart $\cH$ of the t-structure determined by $M$.
\end{Thm}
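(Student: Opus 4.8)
The plan is to combine the structural results already established in this section. By the two preceding propositions, the pair $(\cT^{\leq0}_M,\cT^{\geq0}_M)$ defines a split, bounded t-structure on $\cT$ whose heart is $\cH$. The whole point of the earlier part of the paper was to handle exactly this situation: by the Lemma of the previous section, the heart of a split t-structure is an admissible hereditary abelian subcategory of $\cT$, so $\cH\subset\cT$ is an admissible hereditary abelian subcategory, and moreover it is the heart of a bounded t-structure. We are therefore precisely in the situation covered by \hyperref[Cor:Main]{Corollary \ref*{Cor:Main}}, condition (3).

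Concretely, I would argue as follows. First invoke the Proposition that $(\cT^{\leq0}_M,\cT^{\geq0}_M)$ is a split t-structure, and the Proposition that it is bounded since by hypothesis there is no path $M\rightsquigarrow M[-1]$ (using that $\cT$ is a block). Next, apply the Lemma from Section 3 to conclude that the heart $\cH$ is an admissible hereditary abelian subcategory of $\cT$. Then apply \hyperref[Cor:Main]{Corollary \ref*{Cor:Main}}: condition (3) holds, namely $\cH$ is the heart of a split, bounded t-structure, so condition (1) holds, giving a triangle equivalence $\cD^b(\cH)\xrightarrow\sim\cT$ lifting the inclusion. Alternatively, and even more directly, one can note that by the (3)$\Rightarrow$(1) argument in the proof of \hyperref[Cor:Main]{Corollary \ref*{Cor:Main}} we have $\thick=\cT$ (every object decomposes as $\bigoplus_n H^n[-n]$ with $H^n\in\cH$), and then \hyperref[Thm:Main]{Theorem \ref*{Thm:Main}} yields the triangle equivalence $F\colon\cD^b(\cH)\xrightarrow\sim\thick=\cT$.

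Since all the substantial work is already done, there is essentially no obstacle here — the theorem is a corollary, not a new result. The only point requiring a moment's care is the bookkeeping to see that we are genuinely in the hypotheses of the earlier statements: that $\cH$ really is \emph{admissible} hereditary abelian (not merely an abelian subcategory which happens to be hereditary), which is supplied by the split-t-structure Lemma of Section 3 rather than needing any fresh argument, and that boundedness of the t-structure is what makes the decomposition process in the proof of \hyperref[Cor:Main]{Corollary \ref*{Cor:Main}} terminate. With those two observations in place the proof is a one-line citation of Theorem \ref*{Thm:Main} (or of Corollary \ref*{Cor:Main}).
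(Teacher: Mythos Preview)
Your proposal is correct and matches the paper's intent: the paper gives no explicit proof for this theorem, treating it as an immediate consequence of the two preceding Propositions together with \hyperref[Cor:Main]{Corollary~\ref*{Cor:Main}} (equivalently, the Section~3 Lemma plus \hyperref[Thm:Main]{Theorem~\ref*{Thm:Main}}). Your write-up is exactly the intended chain of citations.
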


\appendix

\section{Split exact triangles}

Let $\cT$ be an additive category equipped with an auto-equivalence $X\mapsto X[1]$. A triangle $(f,g,h)$ in $\cT$ is a sequence
\[ \begin{tikzcd} X \arrow{r}{f} & Y \arrow{r}{g} & Z \arrow{r}{h} & X[1] \end{tikzcd} \]
A morphism $(x,y,z)$ of triangles is a commutative diagram of the form
\[ \begin{tikzcd}
X \arrow{r}{f} \arrow{d}{x} & Y \arrow{r}{g} \arrow{d}{y} & Z \arrow{r}{h} \arrow{d}{z} & X[1] \arrow{d}{x[1]}\\
X' \arrow{r}{f'} & Y' \arrow{r}{g'} & Z' \arrow{r}{h'} & X'[1]
\end{tikzcd} \]
A pre-triangulated structure on $\cT$ consists of a collection of triangles, called exact triangles, closed under isomorphism of triangles and satisfying the following axioms
\begin{enumerate}
\item[(TR0)] For each $X$ we have an exact triangle
\[ \begin{tikzcd} 0 \arrow{r} & X \arrow{r}{1} & X \arrow{r} & 0 \end{tikzcd} \]
\item[(TR1)] For each $f\colon X\to Y$ there exists an exact triangle
\[ \begin{tikzcd} X \arrow{r}{f} & Y \arrow{r} & Z \arrow{r} & X[1] \end{tikzcd} \]
\item[(TR2)] The triangle $(f,g,h)$ is exact if and only if the triangle $(-g,-h,-f[1])$ is exact.
\item[(TR3)] Any commutative square can be completed to a morphism of exact triangles 
\[ \begin{tikzcd}
X \arrow{r}{f} \arrow{d}{x} & Y \arrow{r}{g} \arrow{d}{y} & Z \arrow{r}{h} \arrow[dotted]{d}{\exists z} & X[1] \arrow{d}{x[1]}\\
X \arrow{r}{f'} & Y' \arrow{r}{g'} & Z' \arrow{r}{h'} & X'[1]
\end{tikzcd} \]
\end{enumerate}

We record the following easy consequences of the axioms (see for example \cite{Neeman2}).

\begin{Lem}
Let $\cT$ be a pre-triangulated category.
\begin{enumerate}
\item If $(x,y,z)$ is a morphism of triangles such that two of $x,y,z$ are isomorphisms, then so is the third.
\item Each functor $\Hom(U,-)$ is homological on exact triangles, so yields a long exact sequence of abelian groups. Dually each functor $\Hom(-,U)$ is cohomological. In particular, if $(f,g,h)$ is an exact triangle, then $gf=0$.
\item The collection of exact triangles is closed under taking direct sums and direct summands.
\end{enumerate}
\end{Lem}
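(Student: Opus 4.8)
These three facts are the standard bookkeeping consequences of (TR0)--(TR3), and I would prove them in the order (2), (1), (3), since (1) uses the five-lemma style argument built from the long exact sequences of (2), and (3) uses both.

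For part (2), fix an exact triangle $(f,g,h)\colon X\to Y\to Z\to X[1]$ and an object $U$. I would show exactness of the sequence
\[ \cdots \to \Hom(U,X) \xrightarrow{f_*} \Hom(U,Y) \xrightarrow{g_*} \Hom(U,Z) \xrightarrow{h_*} \Hom(U,X[1]) \to \cdots \]
at $\Hom(U,Y)$; exactness at the other spots then follows by rotating the triangle via (TR2) (which is legitimate since rotation only changes maps by signs, hence does not affect exactness of the $\Hom$-sequence). That $g_* f_* = 0$ follows because $gf = 0$, which itself comes from applying (TR3) to the square comparing $(0,1,\cdot)\colon 0\to X\to X\to 0$ (the (TR0) triangle) with $(f,g,h)$: the induced map on cones forces $gf$ to factor through $0$. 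Wait --- more directly, I would get $gf=0$ as a \emph{consequence} of the $\Hom$-exactness once established, or prove it first by the elementary (TR3) argument; let me keep it as a lemma proved first via (TR3) applied to $1_X\colon X\to X$ completed against $f$. For the reverse inclusion $\ker g_* \subseteq \operatorname{im} f_*$: given $u\colon U\to Y$ with $gu=0$, use (TR3) on the square
\[ \begin{tikzcd} U \arrow{r}{1} \arrow{d} & U \arrow{r} \arrow{d}{u} & 0 \arrow{r} \arrow{d} & U[1] \arrow{d} \\ X \arrow{r}{f} & Y \arrow{r}{g} & Z \arrow{r}{h} & X[1] \end{tikzcd} \]
(using the (TR0) triangle on $U$ in the top row, where the commutativity of the right-hand square $gu = 0\cdot(\text{something})$ holds precisely because $gu=0$), obtaining $v\colon U\to X$ with $fv = u$. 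Thus $u\in\operatorname{im} f_*$. The dual statement for $\Hom(-,U)$ is proved by the evident dualisation of the above, or by formally dualising (TR0)--(TR3).

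For part (1), let $(x,y,z)$ be a morphism of exact triangles with, say, $x$ and $z$ isomorphisms (the other two cases reduce to this by rotating both triangles via (TR2) and relabelling). For each $U$ apply $\Hom(U,-)$ to both rows to get a morphism of long exact sequences; since $x_*$, $x[1]_*$, $z_*$ (and their neighbours) are isomorphisms, the five lemma gives that $y_*\colon\Hom(U,Y)\to\Hom(U,Y')$ is an isomorphism for every $U$. Taking $U=Y'$ produces a two-sided inverse to $y$ (one direction from surjectivity at $U=Y'$, the other from the analogous argument with $\Hom(-,Y)$, or simply from injectivity and surjectivity of $y_*$ across all $U$ by Yoneda). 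Hence $y$ is an isomorphism.

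For part (3), closure under finite direct sums of exact triangles follows from (TR1) plus (TR3): given exact triangles $(f_i,g_i,h_i)$ for $i=1,2$, complete $f_1\oplus f_2$ to \emph{some} exact triangle by (TR1), then use (TR3) twice (with the inclusion/projection maps of the summands) to build a morphism from $(f_1,g_1,h_1)\oplus(f_2,g_2,h_2)$ to it whose first two components are identities; part (1) then forces the third component to be an isomorphism, so the direct-sum triangle is exact. Closure under direct summands is the mild converse: if $(f,g,h)\oplus(f',g',h')$ is exact and we want $(f,g,h)$ exact, complete $f$ by (TR1) to an exact triangle $T$, take the direct sum with the (exact) triangle $(f',g',h')$ to get an exact triangle isomorphic --- via (TR3) and part (1) again --- to $(f,g,h)\oplus(f',g',h')$; then the summand $T$ is isomorphic as a triangle to $(f,g,h)$, which is therefore exact since exact triangles are closed under isomorphism. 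The one point requiring a little care, and the closest thing to an obstacle, is making sure each application of (TR3) is against a genuinely commutative square, i.e. checking the compatibility conditions on the maps $h_i$ into the shifted objects; but these all hold because the relevant composites vanish or agree by construction, so no real difficulty arises.
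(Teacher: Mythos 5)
The paper does not actually prove this lemma; it records it with a pointer to Neeman's book, so your proposal is being measured against the standard arguments rather than against anything in the text. Parts (1) and (2) are correct and standard. (One cosmetic point on (2): the axiom (TR3) as stated completes the \emph{third} vertical map of a commutative square on the first two objects, so to manufacture $v\colon U\to X$ from the square expressing $gu=0$ you must first rotate both triangles via (TR2) so that the desired map sits in the third position.) The direct-sum half of (3) is also essentially right, except that your appeal to part (1) is not literally available: part (1), as you prove it, concerns a morphism between two \emph{exact} triangles, whereas here the target $(f_1,g_1,h_1)\oplus(f_2,g_2,h_2)$ is precisely what you are trying to show is exact. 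The fix is immediate --- $\Hom(U,-)$ applied to a direct sum of triangles is the direct sum of the two long exact sequences, hence exact, so the five-lemma/Yoneda argument of part (1) runs unchanged --- but it needs saying.

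The direct-summand half of (3) has a genuine gap, and it is the only part of the lemma with real content. You write ``take the direct sum with the (exact) triangle $(f',g',h')$'', but the hypothesis is only that $(f,g,h)\oplus(f',g',h')$ is exact; you are not given that the complementary summand is exact --- that is exactly as hard as what you are proving, so the argument is circular. Moreover, even granting it, the last step does not follow: an isomorphism of triangles $T\oplus\Delta''\cong\Delta'\oplus\Delta''$ which is the identity on the first two terms need not be diagonal on the third, so you cannot simply cancel $\Delta''$ and conclude $T\cong\Delta'$. The standard repair, writing $\Delta'=(f,g,h)$ with objects $X,Y,Z$ and $\Delta''=(f',g',h')$ with objects $X',Y',Z'$: (i) observe that $\Hom(U,\Delta')$ is a retract of the exact sequence $\Hom(U,\Delta'\oplus\Delta'')$, and a retract of an exact sequence of abelian groups is exact; (ii) complete $f$ to an exact triangle $T$ by (TR1), apply (TR3) to the commutative square given by the inclusions $X\to X\oplus X'$, $Y\to Y\oplus Y'$ (both triangles here are exact) and compose with the projection $\Delta'\oplus\Delta''\to\Delta'$ to obtain a morphism of triangles $(1,1,\phi)\colon T\to\Delta'$; (iii) the five lemma, using (i), together with Yoneda shows $\phi$ is an isomorphism, so $\Delta'\cong T$ is exact.
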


An exact triangle $(f,g,h)$ is split provided one of $f,g,h$ is zero.

\begin{Lem}
Given a split exact triangle
\[ \begin{tikzcd} X \arrow{r}{f} & Y \arrow{r}{g} & Z \arrow{r}{0} & X[1] \end{tikzcd} \]
there exists an isomorphism $\theta\colon Y\to X\oplus Z$ such that $\theta f=\binom10$ and $g=(0,1)\theta$.
\end{Lem}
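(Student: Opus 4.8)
The statement is a purely formal consequence of the pre-triangulated axioms, and the plan is to produce $\theta$ using (TR3) together with the two-out-of-three property of part (1) of the preceding lemma. First I would record that the triangle
\[ \begin{tikzcd} X \arrow{r}{\binom{1}{0}} & X\oplus Z \arrow{r}{(0,1)} & Z \arrow{r}{0} & X[1] \end{tikzcd} \]
is exact: it is the direct sum of a rotation of (TR0) for $X$, namely $X\xrightarrow{1}X\to0\to X[1]$ (exact by (TR2), up to the standard sign normalisation allowed by closure under isomorphism of triangles), and the triangle $0\to Z\xrightarrow{1}Z\to0$ which is (TR0) for $Z$. Hence it is exact by part (3) of the preceding lemma (closure of exact triangles under direct sums).

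Next I would rotate this triangle and the given split triangle $(f,g,0)$ one step backwards, using (TR2), so that the zero connecting map moves into the first position; no sign is introduced, since $-0=0$. This gives exact triangles
\[ \begin{tikzcd} Z[-1] \arrow{r}{0} & X \arrow{r}{f} & Y \arrow{r}{g} & Z \end{tikzcd} \qquad\text{and}\qquad \begin{tikzcd} Z[-1] \arrow{r}{0} & X \arrow{r}{\binom{1}{0}} & X\oplus Z \arrow{r}{(0,1)} & Z \end{tikzcd} \]
each genuinely of the form $A\to B\to C\to A[1]$ with $A=Z[-1]$, so $A[1]=Z$. Applying (TR3) to the trivially commuting left-hand square with vertical maps $\mathrm{id}_{Z[-1]}$ and $\mathrm{id}_X$ produces a morphism $\theta\colon Y\to X\oplus Z$ fitting into a morphism of triangles whose four vertical components are $\mathrm{id}_{Z[-1]},\mathrm{id}_X,\theta,\mathrm{id}_Z$. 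Commutativity of the remaining two squares of this morphism reads off precisely as $\theta f=\binom{1}{0}$ and $(0,1)\theta=g$. Finally, since two of the three relevant vertical components ($\mathrm{id}_{Z[-1]}$ and $\mathrm{id}_X$) are isomorphisms, part (1) of the preceding lemma forces $\theta$ to be an isomorphism.

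There is no real obstacle; the only thing requiring a moment's care is the bookkeeping around the backward rotation, namely verifying that the rotated sequences are literally of the required shape so that (TR3) and the two-out-of-three lemma apply verbatim. If one prefers to avoid rotations, one can instead extract a retraction $r\colon Y\to X$ of $f$ and a section $s\colon Z\to Y$ of $g$ from the long exact sequences of part (2) of the lemma, put $\theta=\binom{r}{g}$, and check directly that $\phi=(f,\;s-frs)$ is a two-sided inverse, using $gf=0$, $rf=\mathrm{id}_X$ and $gs=\mathrm{id}_Z$; this variant is slightly more computational but equally routine.
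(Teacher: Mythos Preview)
Your proof is correct and follows essentially the same approach as the paper: compare the given split triangle with the standard split triangle $X\xrightarrow{\binom10}X\oplus Z\xrightarrow{(0,1)}Z\xrightarrow{0}X[1]$ via (TR3), then invoke the two-out-of-three property to conclude that $\theta$ is an isomorphism. The only difference is bookkeeping: the paper applies (TR3) directly with $\theta$ in the middle position (implicitly using a rotation), whereas you rotate backward first so that (TR3) applies verbatim with $\theta$ in the third slot; both yield the same $\theta$ with the same commutativity constraints.
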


\begin{proof}
By (TR3) there exists a morphism of exact triangles
\[ \begin{tikzcd}
X \arrow{r}{f} \arrow[-, double equal sign distance]{d} & Y \arrow{r}{g} \arrow[dotted]{d}{\exists\theta} & Z \arrow{r}{0} \arrow[-, double equal sign distance]{d} & X[1] \arrow[-, double equal sign distance]{d}\\
X \arrow{r}{\binom10} & X\oplus Z \arrow{r}{(0,1)} & Z \arrow{r}{0} & X[1]
\end{tikzcd} \]
and then $\theta$ is necessarily an isomorphism.
\end{proof}

\begin{Lem}\label{Lem:split}
Consider an exact triangle
\[ \begin{tikzcd} X \arrow{r}{\binom{f}{f'}} & Y\oplus Y' \arrow{r}{(g,g')} & Z \arrow{r}{h} & X[1] \end{tikzcd} \]
If $f'=0$, then this is isomorphic to the direct sum of the exact triangles
\[ \begin{tikzcd} X \arrow{r}{f} & Y \arrow{r}{\alpha} & C \arrow{r}{\beta} & X[1] \end{tikzcd} \]
with $C=\mathrm{cone}(f)$, and
\[ \begin{tikzcd} 0 \arrow{r} & Y' \arrow{r}{1} & Y' \arrow{r} & 0 \end{tikzcd} \]
If $g=0$ as well, then $\alpha=0$ and the first of these triangles is also split.
\end{Lem}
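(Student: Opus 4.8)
The plan is to identify the given triangle with an explicit direct sum by building the second summand out of a cone of $f$ and then invoking the uniqueness of cones up to (non-canonical) isomorphism.

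First I would use (TR1) to fix an exact triangle $X \xrightarrow{f} Y \xrightarrow{\alpha} C \xrightarrow{\beta} X[1]$ with $C=\mathrm{cone}(f)$. By (TR0) the triangle $0 \to Y' \xrightarrow{1} Y' \to 0$ is exact, so by part (3) of the preceding lemma its direct sum with the cone triangle,
\[ X \xrightarrow{\binom{f}{0}} Y\oplus Y' \xrightarrow{\left(\begin{smallmatrix}\alpha & 0\\ 0 & 1\end{smallmatrix}\right)} C\oplus Y' \xrightarrow{(\beta,0)} X[1], \]
is again an exact triangle --- and this is precisely the direct sum appearing in the statement.

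Next I would observe that, because $f'=0$, this triangle and the given one share the same first morphism $\binom{f}{0}\colon X\to Y\oplus Y'$. Hence the square with vertical identities on $X$ and on $Y\oplus Y'$ commutes, and (TR3) completes it to a morphism of exact triangles whose third component is some $z\colon Z\to C\oplus Y'$. Since two of the three vertical maps are identities, part (1) of the preceding lemma forces $z$ to be an isomorphism, so the given triangle is isomorphic to the displayed direct sum, as required.

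For the last assertion I would read off from this morphism of triangles the relation $z\circ(g,g')=\left(\begin{smallmatrix}\alpha & 0\\ 0 & 1\end{smallmatrix}\right)$; if in addition $g=0$, then $(g,g')=(0,g')$, and comparing the first column in this matrix identity forces $\alpha=0$ (and, incidentally, shows that $z$ restricts to a retraction of $g'$). With $\alpha=0$ the triangle $X \xrightarrow{f} Y \xrightarrow{0} C \xrightarrow{\beta} X[1]$ is split by definition. I do not anticipate a genuine obstacle here: the only points needing care are the small matrix bookkeeping in the final step and getting the direction of the (TR3) comparison right so that part (1) of the preceding lemma applies.
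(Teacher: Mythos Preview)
Your proposal is correct and follows essentially the same route as the paper: form the direct-sum triangle, apply (TR3) with identities on the first two terms, conclude the third map is an isomorphism via the 2-out-of-3 property, and then read off $\alpha=\phi g=0$ from the commuting middle square when $g=0$. The paper is terser but the argument is identical.
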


\begin{proof}
By (TR3) there exists a morphism of exact triangles
\[ \begin{tikzcd}[ampersand replacement=\&]
X \arrow{r}{\binom f0} \arrow[-, double equal sign distance]{d} \& Y\oplus Y' \arrow{r}{(g,g')} \arrow[-, double equal sign distance]{d} \& Z \arrow{r}{h} \arrow[dotted]{d}{\exists\binom\phi s} \& X[1] \arrow[-, double equal sign distance]{d}\\
X \arrow{r}{\binom f0} \& Y\oplus Y' \arrow{r}{\left(\begin{smallmatrix}\alpha&0\\0&1\end{smallmatrix}\right)} \& C\oplus Y' \arrow{r}{(\beta,0)} \& X[1]
\end{tikzcd} \]
which is necessarily an isomorphism. If now $g=0$, then $\alpha=\phi g=0$.
\end{proof}

\section{The Octahedral Axiom}

In this section, let $\cT$ be a pre-triangulated category. We recall the Octahedral Axiom (TR4), and two equivalent axioms (\AxiomA) and (\AxiomB). Note that (\AxiomB) can be viewed as a more precise version of (TR3) in the case where the outside maps are equalities. In fact, (TR4) is also equivalent to the \textit{a priori} weaker axiom where we do not assume the triangle given by $(\dagger)$ is exact.

Note that, using the reference \cite{Neeman2}, our axiom (TR4) is Proposition 1.4.6, our axiom (\AxiomA) is two applications of Lemma 1.4.4, and our axiom (\AxiomB) is Lemma 1.4.3.

\subsection*{(TR4)}

An exact commutative diagram
\[ \begin{tikzcd}
X \arrow{r}{f} \arrow[-, double equal sign distance]{d} & Y \arrow{r}{g} \arrow{d}{u} & Z \arrow{r}{h} \arrow[dotted]{d}{\exists u'} & X[1] \arrow[-, double equal sign distance]{d}\\
X \arrow{r}{f'} & Y' \arrow{r}{g'} \arrow{d}{v} & Z' \arrow{r}{h'} \arrow[dotted]{d}{\exists v'} & X[1]\\
& W \arrow[-, double equal sign distance]{r} \arrow{d}{w} & W \arrow{d}{w'}\\
& Y[1] \arrow{r}{g[1]} & Z[1]
\end{tikzcd} \]
can be completed as shown such that $wv'=\delta=f[1]h'$.

(Strong form) Moreover, we may even complete in such a way that we have an exact triangle
\[ \begin{tikzcd}
Y \arrow{r}{\binom{-g}{u}} & Z\oplus Y' \arrow{r}{(u',g')} & Z' \arrow{r}{\delta} & Y[1]
\end{tikzcd} \tag{$\dagger$}\]

\subsection*{(\AxiomA)}

Given an exact triangle
\[ \begin{tikzcd}
Y \arrow{r}{\binom{-g}{u}} & Z\oplus Y' \arrow{r}{(u',g')} & Z' \arrow{r}{\delta} & Y[1]
\end{tikzcd} \]
an exact commutative diagram
\[ \begin{tikzcd}
X \arrow{r}{f} \arrow[-, double equal sign distance]{d} & Y \arrow{r}{g} \arrow{d}{u} & Z \arrow{r}{h} \arrow{d}{u'} & X[1] \arrow[-, double equal sign distance]{d}\\
X \arrow{r}{f'} & Y' \arrow{r}{g'} \arrow{d}{v} & Z' \arrow[dotted]{r}{\exists h'} \arrow[dotted]{d}{\exists v'} & X[1]\\
& W \arrow[-, double equal sign distance]{r} \arrow{d}{w} & W \arrow{d}{w'}\\
& Y[1] \arrow{r}{g[1]} & Z[1]
\end{tikzcd} \]
may be completed as shown such that $wv'=\delta=f[1]h'$.

\subsection*{(\AxiomB)}

An exact commutative diagram
\[ \begin{tikzcd}
X \arrow{r}{f} \arrow[-, double equal sign distance]{d} & Y \arrow{r}{g} \arrow{d}{u} & Z \arrow{r}{h} \arrow[dotted]{d}{\exists u'} & X[1] \arrow[-, double equal sign distance]{d} \\
X \arrow{r}{f'} & Y' \arrow{r}{g'} & Z' \arrow{r}{h'} & X[1]
\end{tikzcd} \] 
may be completed as shown such that we have an exact triangle
\[ \begin{tikzcd}
Y \arrow{r}{\binom{-g}{u}} & Z\oplus Y' \arrow{r}{(u',g')} & Z' \arrow{r}{f[1]h'} & Y[1]
\end{tikzcd} \]

\subsection*{Proof}

(\AxiomB)$\Rightarrow$(TR4) (strong form). Apply (\AxiomB) first to the diagram
\[ \begin{tikzcd}
X \arrow{r}{f} \arrow[-, double equal sign distance]{d} & Y \arrow{r}{g} \arrow{d}{u} & Z \arrow{r}{h} \arrow[dotted]{d}{\exists u'} & X[1] \arrow[-, double equal sign distance]{d} \\
X \arrow{r}{f'} & Y' \arrow{r}{g'} & Z' \arrow{r}{h'} & X[1]
\end{tikzcd} \] 
and then to the diagram
\[ \begin{tikzcd}
Y \arrow{r}{\binom{-g}{u}} \arrow[-, double equal sign distance]{d} & Z\oplus Y' \arrow{r}{(u',g')} \arrow{d}{(0,1)} & Z' \arrow{r}{f[1]h'} \arrow[dotted]{d}{\exists v'} & Y[1] \arrow[-, double equal sign distance]{d} \\
Y \arrow{r}{u} & Y' \arrow{r}{v} & W \arrow{r}{w} & Y[1]
\end{tikzcd} \]
Setting $w'=g[1]w$, the resulting exact triangle
\[ \begin{tikzcd}[ampersand replacement=\&, column sep=50pt]
Z\oplus Y' \arrow{r}{\big(\begin{smallmatrix}-u'&-g'\\0&1\end{smallmatrix}\big)} \& Z'\oplus Y' \arrow{r}{(v',v)} \& W \arrow{r}{\binom{-w'}{0}} \& Z[1]\oplus Y'[1]
\end{tikzcd} \]
contains as a direct summand
\[ \begin{tikzcd}
Z \arrow{r}{-u'} & Z' \arrow{r}{v'} & W \arrow{r}{-w'} & Z[1]
\end{tikzcd} \]
so this triangle is also exact.

(TR4) (weak form)$\Rightarrow$(\AxiomA). Consider the two octahedra
\[ \begin{tikzcd}[column sep=40pt, row sep=25pt]
Y \arrow{r}{\binom{-g}{u}} \arrow[-, double equal sign distance]{d} & Z\oplus Y' \arrow{r}{(u',g')} \arrow{d}{(1,0)} & Z' \arrow{r}{\delta} \arrow[dotted]{d}{h'} & Y[1] \arrow[-, double equal sign distance]{d}\\
Y \arrow{r}{-g} & Z \arrow{r}{h} \arrow{d}{0} & X[1] \arrow{r}{f[1]} \arrow[dotted]{d}{f'[1]} & Y[1]\\
& Y'[1] \arrow[-, double equal sign distance]{r} \arrow{d}{\binom{0}{1}} & Y'[1] \arrow{d}{g'[1]}\\
& Z[1]\oplus Y'[1] \arrow{r}{(u'[1],g'[1])} & Z'[1]
\end{tikzcd} \]
and
\[ \begin{tikzcd}[column sep=40pt, row sep=25pt]
Y \arrow{r}{\binom{-g}{u}} \arrow[-, double equal sign distance]{d} & Z\oplus Y' \arrow{r}{(u',g')} \arrow{d}{(0,1)} & Z' \arrow{r}{\delta} \arrow[dotted]{d}{v'} & Y[1] \arrow[-, double equal sign distance]{d}\\
Y \arrow{r}{u} & Y' \arrow{r}{v} \arrow{d}{0} & W \arrow{r}{w} \arrow[dotted]{d}{-w'} & Y[1]\\
& Z[1] \arrow[-, double equal sign distance]{r} \arrow{d}{\binom{1}{0}} & Z[1] \arrow{d}{u'[1]}\\
& Z[1]\oplus Y'[1] \arrow{r}{(u'[1],g'[1])} & Z'[1]
\end{tikzcd} \]

(\AxiomA)$\Rightarrow$(\AxiomB). Use (TR1) to obtain an exact triangle
\[ \begin{tikzcd} Y \arrow{r}{\binom{-g}{u}} & Z\oplus Y' \arrow{r}{(a,b)} & C \arrow{r}{\delta} & Y[1] \end{tikzcd} \]
Apply (\AxiomA) to obtain (from the top two rows) an exact commutative diagram
\[ \begin{tikzcd}
X \arrow{r}{f} \arrow[-, double equal sign distance]{d} & Y \arrow{r}{g} \arrow{d}{u} & Z \arrow{r}{h} \arrow{d}{a} & X[1] \arrow[-, double equal sign distance]{d}\\
X \arrow{r}{f'} & Y' \arrow{r}{b} & C \arrow[dotted]{r}{\exists c} & X[1]
\end{tikzcd} \]
such that $f[1]c=\delta$. Use (TR3) to get an isomorphism
\[ \begin{tikzcd}
X \arrow{r}{f'} \arrow[-, double equal sign distance]{d} & Y' \arrow{r}{b} \arrow[-, double equal sign distance]{d} & C \arrow{r}{c} \arrow[dotted]{d}{\exists\gamma} & X[1] \arrow[-, double equal sign distance]{d}\\
X \arrow{r}{f'} & Y' \arrow{r}{g'} & Z' \arrow{r}{h'} & X[1]
\end{tikzcd} \]
Now set $u':=\gamma a$.

\end{document}